  \def\refstepcounter@optarg[#1]#2{%
    \cref@old@refstepcounter{#2}%
    \cref@constructprefix{#2}{\cref@result}%
    \@ifundefined{cref@#1@alias}%
      {\def\@tempa{#1}}%
      {\def\@tempa{\csname cref@#1@alias\endcsname}}%
    \protected@edef\cref@currentlabel{%
      [\@tempa][\arabic{#2}][\cref@result]%
      \csname p@#2\endcsname\csname the#2\endcsname}}%
\newcommand{\bA}{{\bf A}}
\newcommand{\bD}{{\bf D}}
\newcommand{\bF}{{\bf F}}
\newcommand{\bH}{{\bf H}}
\newcommand{\bI}{{\bf I}}
\newcommand{\bJ}{{\bf J}}
\newcommand{\bK}{{\bf K}}
\newcommand{\bL}{{\bf L}}
\newcommand{\bP}{{\bf P}}
\newcommand{\bU}{{\bf U}}
\newcommand{\bb}{{\bf b}}
\newcommand{\bd}{{\bf d}}
\newcommand{\be}{{\bf e}}
\newcommand{\bff}{{\bf f}}
\newcommand{\bg}{{\bf g}}
\newcommand{\br}{{\bf r}}
\newcommand{\bs}{{\bf s}}
\newcommand{\bt}{{\bf t}}
\newcommand{\bx}{{\bf x}}
\newcommand{\by}{{\bf y}}
\newcommand{\bSigma}{{\bf \Sigma}}
\newcommand{\calF}{\mathcal{F}}
\newcommand{\calN}{\mathcal{N}}
\newcommand{\calP}{\mathcal{P}}
\newcommand{\calR}{\mathcal{R}}
\newcommand{\bbR}{\mathbb{R}}
\newcommand{\TheTitle}{Variable Projection Methods for Solving Regularized Separable Inverse Problems with Applications to Semi-Blind Image Deblurring} 
\newcommand{\ShortTitle}{VarPro for separable nonlinear inverse problems} 
\newcommand{\TheAuthors}{D. B. Comerso Salzer, M. I. Espa\~ nol, and G. Jeronimo}
\headers{\ShortTitle}{\TheAuthors}
\title{{\TheTitle}}
\author{
Delfina B. Comerso Salzer\thanks{Departamento de Matem\'atica, Facultad de Ciencias Exactas y Naturales, Universidad de Buenos Aires, Argentina.}
\and
Malena I. Espa\~nol\thanks{School of Mathematical and Statistical Sciences, Arizona State University, Tempe, AZ, United States (\email{malena.espanol@asu.edu}).}
\and
Gabriela Jeronimo\thanks{CONICET–Universidad de Buenos Aires, Instituto de Investigaciones Matem\'aticas ``Luis A. Santal\'o'' (IMAS), Argentina (\email{jeronimo@dm.uba.ar}).}
}
\begin{document}
\maketitle

\begin{abstract} Separable nonlinear least squares problems appear in many inverse problems, including semi-blind image deblurring. The variable projection (VarPro) method provides an efficient approach for solving such problems by eliminating linear variables and reducing the problem to a smaller, nonlinear one. In this work, we extend VarPro to solve minimization problems containing a differentiable regularization term on the nonlinear parameters, along with a general-form Tikhonov regularization term on the linear variables. Furthermore, we develop a quasi-Newton method for solving the resulting reduced problem, and provide a local convergence analysis under standard smoothness assumptions, establishing conditions for superlinear or quadratic convergence. For large-scale settings, we introduce an inexact LSQR-based variant and prove its local convergence despite inner-solve and Hessian approximations. Numerical experiments on semi-blind deblurring show that parameter regularization prevents degenerate no-blur solutions and that the proposed methods achieve accurate reconstructions, with the inexact variant offering a favorable accuracy-cost tradeoff consistent with the theory. 

\end{abstract}

\begin{keywords} Variable projection, regularization, quasi-Newton, semi-blind image deblurring, inverse problems
\end{keywords}

\begin{AMS} 65F22, 65F10, 68W40
\end{AMS}

\section{Introduction} We are interested in solving discrete ill-posed inverse problems of the form 
\begin{equation*}
\bA(\by)\bx  \approx \bb =  \bb_{\rm true} + \mathbf{\epsilon} \quad \mbox{ with } \bA( \by_{\rm true}) \bx_{\rm true} = \bb_{\rm true},
\end{equation*}
where the vector $\bb_{\rm true} \in \mathbb{R}^m$ denotes the unknown noise-free data vector while $\mathbf{\epsilon} \in \mathbb{R}^m$ represents additive measurement errors. The matrix $\bA(\by) \in \mathbb{R}^{m\times n}$ with $m\geq n$ models a forward operator and is typically severely ill-conditioned. Although the forward operator is not known explicitly, we assume that it can be represented as a differentiable function of a small number of parameters, collected in a vector $\by \in \mathbb{R}^r$, where $r \ll n$, and that the resulting mapping $\by \mapsto \bA(\by)$ is differentiable. Our objective is to recover accurate approximations of both the solution $\bx_{\rm true}$ and the parameter vector $\by_{\rm true}$ using the measured data $\bb$ and a matrix function that maps the unknown vector $\by$ to an $m \times n$ matrix $\bA$. To accomplish this task, we could solve
\begin{equation}\label{eq: nsl2}
\min_{\bx, \by} \frac{1}{2}\left\|\bA(\by)\bx - \bb \right\|^2 + \frac{\lambda^2}{2} \left\|\bL \bx\right\|^2,
\end{equation}
where $\|\cdot\|$ denotes the 2-norm (which will also be used for matrices throughout the paper, in which case it denotes the matrix norm induced by the vector 2-norm), $\lambda>0$ is a \emph{regularization parameter}, and $\bL \in \mathbb{R}^{q\times n}$ is a \emph{regularization operator}. We assume that $\bL$ satisfies $\mathcal{N}(\bA(\by)) \cap \mathcal{N} (\bL) = \{\boldsymbol{0}\}$ for all feasible values of $\by$, so that the minimization problem \eqref{eq: nsl2} has a unique solution for $\by$ fixed. We call problems of the form \eqref{eq: nsl2}  \emph{separable} nonlinear least squares (SNLS) since the observations depend nonlinearly on the vector of unknown parameters $\by$ and linearly on the solution vector~$\bx$. 

The Variable Projection (VarPro) method was originally developed in the 1970s by Golub and Pereyra \cite{golub1973differentiation} to solve \eqref{eq: nsl2} for unregularized separable nonlinear least squares problems (i.e., $\lambda=0$) and has been widely recognized for its efficiency in solving these problems. The main idea behind VarPro is to eliminate the linear variables $\mathbf{x}$ through projection and to reduce the original problem to a smaller nonlinear least squares problem in the parameters $\mathbf{y}$. This reduced nonlinear least squares problem can be solved using the Gauss-Newton Method.

In \cite{Espanol_2023}, Espa\~nol and Pasha extended VarPro to solve inverse problems with general-form Tikhonov regularization for general matrices $\mathbf{L}$. They named this method GenVarPro. For special cases where computing the generalized singular value decomposition (GSVD) of the pair $\{\mathbf{A}(\mathbf{y}),\mathbf{L}\}$ for a fixed value of $\mathbf{y}$ is feasible or a joint spectral decomposition exists, they provided efficient ways to compute the Jacobian matrix and the solution of the linear subproblems. For large-scale problems, where matrix decompositions are not an option, they proposed computing a reduced Jacobian and applying projection-based iterative methods and generalized Krylov subspace methods to solve the linear subproblems.
Following on this theme, Espa\~nol and Jeronimo introduced the Inexact-GenVarPro in \cite{espanol2025local}, which considers a new approximate Jacobian where iterative methods such as LSQR and LSMR are used to solve the linear subproblems. Furthermore, specific stopping criteria were proposed to ensure Inexact-GenVarPro's local convergence.

In this work, we will show how to extend GenVarPro and Inexact-GenVarPro to solve
\begin{equation}\label{eq: nlls_regy}
\min_{\mathbf{x}, \mathbf{y}} \frac{1}{2}\left\|\mathbf{A}(\mathbf{y})\mathbf{x} - \mathbf{b} \right\|^2 + \frac{\lambda^2}{2} \left\|\mathbf{L} \mathbf{x}\right\|^2 + \mathcal{R}(\mathbf{y}),
\end{equation}
where $\mathcal{R}(\mathbf{y})$ plays the role of regularization on the parameter vector~$\mathbf{y}$. We will refer to this problem as the \emph{regularized} SNLS problem (RSNLS). We will motivate the need to incorporate this regularization term in $\mathbf{y}$ in the context of a semi-blind image deblurring problem by showing some examples where, without it, the solution of the reduced problem is trivial (e.g., $\mathbf{y}=\boldsymbol{0}$, $\mathbf{A}(\mathbf{y})$ becomes the identity matrix, and $\mathbf{x}=\mathbf{b}$). 
This situation is similar to what the authors in~\cite{levin2009understanding} refer to as the \emph{no-blur solution} in Bayesian blind deconvolution, where the naive MAP estimate corresponds to a Dirac delta kernel and the observed image itself as the sharp estimate. A related issue was also observed in~\cite{cornelio2014constrained} for the semi-blind deblurring problem, where the authors proposed adding a nonnegativity constraint on the linear variables in problem~\eqref{eq: nsl2}, rather than introducing a regularization term on the nonlinear variables. 
In our setting, regularizing $\mathbf{y}$ prevents such trivial solutions by penalizing degenerate blurring parameters.

Similar variational formulations have been introduced in recent works across a range of applications. These include the training of neural networks~\cite{newman2021train} and the learning of optimal linear mappings~\cite{newman2025optimal} (where the corresponding optimization problems were solved using VarPro) as well as computed tomographic reconstruction~\cite{uribe2022hybrid} (where an alternating minimization scheme was used). More general extensions that go beyond classical Tikhonov regularization and are solved using VarPro-based approaches have been developed in~\cite{aravkin2017efficient, van2021variable}, addressing PDE-constrained optimization problems (such as boundary control and optimal transport) and robust dynamic inference. In the recent work~\cite{marmin2025varprox}, the authors propose a primal-dual variable projection algorithm for solving~\eqref{eq: nlls_regy} with non-smooth regularization term $\mathcal{R}(\mathbf{y})$ and box constraints, motivated by an image texture analysis problem in which $\by$ is high-dimensional.

In this paper, we use semi-blind image deblurring to motivate our extension of the VarPro method and to demonstrate its practical effectiveness. In the image deblurring problem~\cite{austin2022image, espanol2009multilevel, deblurring}, the goal is to recover the unknown sharp image $\bx_{\rm true}$ from the observed blurred and noisy image $\bb$. The blurring operator $\bA$ may be fully known (non-blind deblurring), completely unknown (blind deblurring), or known only up to a low-dimensional parametrization (semi-blind deblurring). In blind deblurring, many methods incorporate explicit regularization on the blur operator or kernel to avoid degenerate solutions~\cite{chan1998total, dykes2021lanczos, levin2009understanding}. In contrast, for semi-blind deconvolution problems, regularization on the blur parameters $\by$ is typically not included~\cite{buccini2018semiblind, chung2010efficient, cornelio2014constrained, espanol2025local, Espanol_2023, gazzola2021regularization}. As we show in this work, the absence of such parameter regularization can lead to trivial or unstable solutions, and incorporating an appropriate regularizer on $\by$ is essential for achieving meaningful reconstructions.

The paper is organized as follows. In Section \ref{sec:varpro}, we review the VarPro method, give a motivating example of why we need to incorporate a regularization term for the model parameters, and present an extension of VarPro for RSNLS problems. In Section \ref{section:quasi-Newton}, we present a quasi-Newton method to solve the reduced problem, and introduce a new method, which we call \texttt{RGenVarPro}, and prove its local convergence. In Section \ref{section:iRGenVarPro}, we introduce the inexact version of \texttt{RGenVarPro} named \texttt{iRGenVarPro}, for solving large-scale problems, where the solution of a linear subproblem is found using LSQR. A local convergence analysis is also presented for this method. In Section \ref{section:special_cases}, we introduce the specific regularization terms used in the numerical experiments in Section \ref{section:NumericalExamples}. These experiments verify our convergence results and the effectiveness of the proposed methods. The conclusions follow in Section \ref{section: conclusions}.

\section{Variable Projection Method}\label{sec:varpro}
The basic idea of VarPro is to solve a minimization problem  
$$\min_{\bx,\,\by}\,\calF(\bx,\by)$$
by observing that, for a fixed \(\by\), we can define
$$\bx(\by) = \arg\min_{\bx}\,\calF(\bx, \by)$$
and substituting $\bx(\by)$ into $\calF$ yields the reduced problem  
\begin{equation}\label{eq: minvarphi}
\min_{\by}\,\varphi(\by)
\end{equation}
 with $$\varphi(\by) = \calF(\bx(\by), \by).$$
 
This approach effectively eliminates the variables $\bx$ from the formulation, resulting in a reduced functional $\varphi(\by)$ that depends solely on $\by$.

\subsection{Motivating Example}\label{sec: motivating example}
We now present an example to motivate the need for the regularization term $\mathcal{R}(\by)$ in \eqref{eq: nlls_regy}, even if $\by$ is of small size. We will show that, without the regularization term, the solution of the minimization problem is trivial (i.e., $\mathbf{y}=\boldsymbol{0}$, and therefore $\mathbf{A}(\mathbf{y})$ becomes the identity matrix, and the solution $\bx=\bb$). 

Our example is a one-dimensional semi-blind deconvolution problem, similar to that considered in the numerical experiments of~\cite{Espanol_2023}, which consists of solving the minimization problem
\begin{equation}\label{eq:snls}
\min_{\bx, \sigma}  \calF(\bx,\sigma) 
\end{equation}
with $$\calF(\bx,\sigma) = \frac{1}{2}\| \bA(\sigma)\bx - \bb \|^2 + \frac{\lambda^2}{2} \|\bx\|^2.$$
In this example, $\by = \sigma$ is a scalar parameter, which allows for an easier analysis of the behavior of the reduced functional $\varphi\colon \mathbb{R} \to \mathbb{R}$. The matrix $\bA(\sigma)\in \mathbb{R}^{n\times n}$ is defined by the Gaussian kernel
$g_\sigma(s) = \mbox{exp}\left(-\frac{s^2}{2\sigma^2}\right)$, where $\sigma$ is the parameter defining the ``width'' of the Gaussian. Assuming zero boundary conditions on $\bx$, $\bA(\sigma)$ is a symmetric Toeplitz matrix with its first column defined by
\begin{align}
   [\bA(\sigma)]_{j+1,1}= \frac{a_j(\sigma)}{G_0(\sigma)}  , \qquad j=0, \dots, n-1,\nonumber
\end{align}
where $a_j(\sigma)=g_\sigma(j)$ and  $G_0(\sigma) = \sum_{j=0}^{n-1} a_j(\sigma)$. Note that $a_0(\sigma)=1$.

We now apply VarPro to solve \eqref{eq:snls}. For every $\sigma$, we define the vector $\bx(\sigma)$ as the solution of
\[ \min_{\bx} \frac{1}{2}\|\bA(\sigma) \bx - \bb \|^2 + \frac{\lambda^2}{2}\| \bx\|^2,\]
which has the closed form
\[\bx(\sigma) = (\bA(\sigma)^\top \bA(\sigma) + \lambda^2 \bI)^{-1} \bA(\sigma)^\top \bb.\]
Next, we  replace this expression  in \eqref{eq:snls}, and solve the reduced problem
\begin{equation*}
\min_\sigma  \varphi(\sigma) 
\end{equation*}
with 
\begin{equation}\label{eq: Tik-problem}
\varphi(\sigma) = \frac{1}{2}\|\bA(\sigma) \bx(\sigma) - \bb\|^2 + \frac{\lambda^2}{2} \| \bx(\sigma)\|^2.
\end{equation}

In the numerical examples in \cite{Espanol_2023}, the convergence curves for $\sigma$ show a semiconvergence behavior: VarPro approaches the true value of $\sigma$, but then appears to converge to a different value. Here, we revisit that example from a different perspective. For $n=128$, we consider two different instances of $\bx_{\rm{true}}$, one smooth and one with edges. Figure \ref{fig:varpro-fminsearch} illustrates the behavior of the reduced functional $\varphi(\sigma)$ for these two cases and for different values of $\lambda$. In both cases, the minimizer occurs at $\sigma=0$. The figure also shows iterates produced by a MATLAB optimization solver, which converge to this minimizer.
\begin{figure}[htbp]
    \centering
    \includegraphics[width=\textwidth]{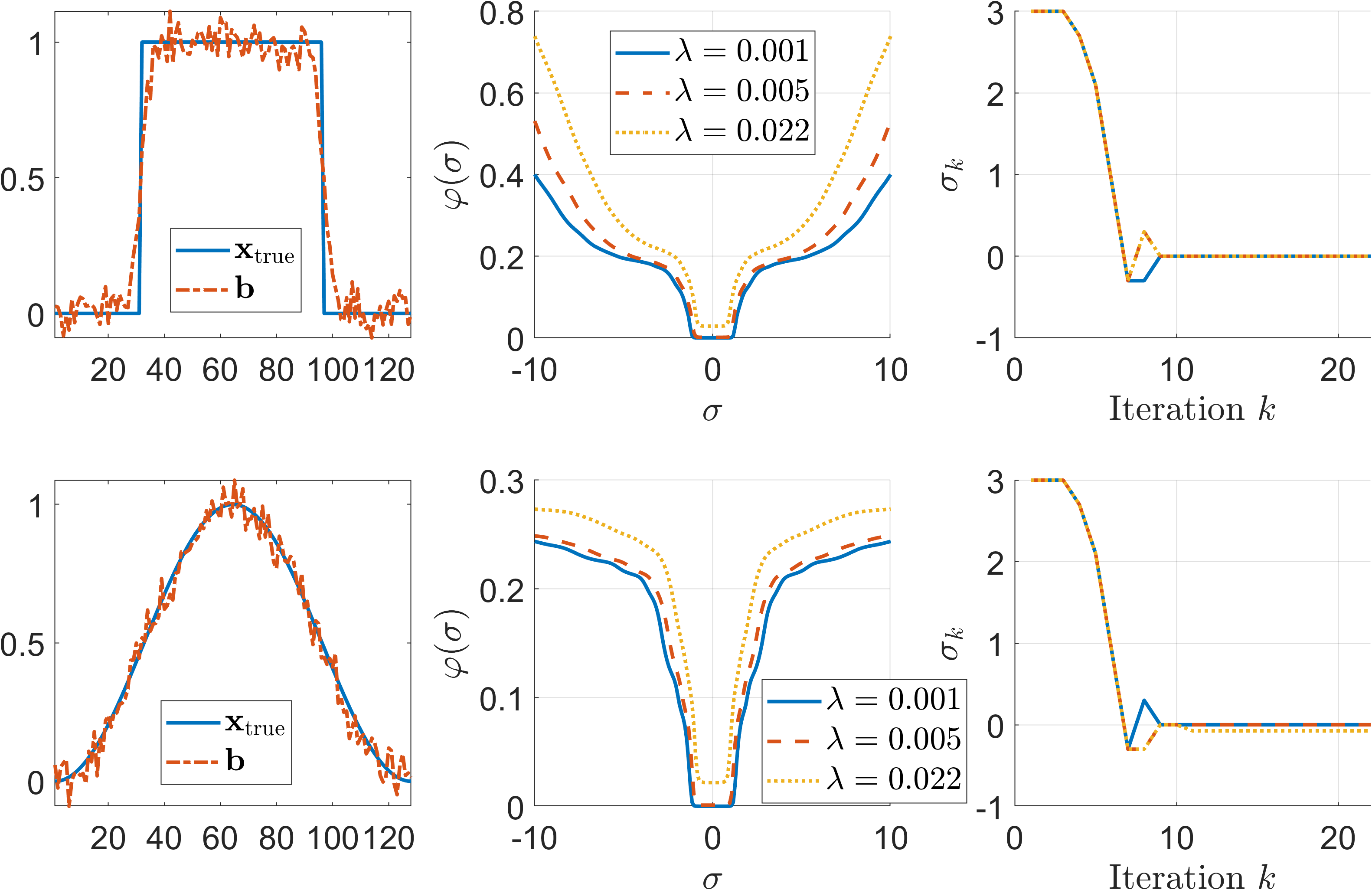}
    \caption{(Left) True signals: an edgy signal (top) and a smooth signal (bottom), and their corresponding blurred and noisy signals; (Middle) reduced functional $\varphi(\sigma)$ for selected values of $\lambda$; and (Right) convergence of $\sigma_k$ using the built-in MATLAB function \texttt{fminsearch}.}
    \label{fig:varpro-fminsearch}
\end{figure}

Next, we want to show analytically for $n=2$ that the minimizer of \eqref{eq:snls} is in fact $\sigma=0$. 
We start by rewriting the matrix $\bA(\sigma)$:
\begin{equation*}\bA(\sigma) = \frac{1}{a_0(\sigma)+a_1(\sigma)}\left[ \begin{array}{cc} a_0(\sigma) & a_1(\sigma) \\ a_1(\sigma) & a_0(\sigma)\end{array}\right] =  \frac{1}{1+a_1(\sigma)}\left[ \begin{array}{cc} 1 & a_1(\sigma) \\ a_1(\sigma) & 1\end{array}\right]. \end{equation*} The singular value decomposition (SVD) of the symmetric matrix $\bA(\sigma)$ is given by 
\[\bA(\sigma) = \bU \bSigma (\sigma) \bU^\top\] 
with 
\begin{equation*}\bU = \frac{1}{\sqrt{2}}\left[\begin{array}{cc} 1 & 1 \\ 1 & -1\end{array} \right] \mbox{ and } \bSigma (\sigma) = \left[\begin{array}{cc} 1 & 0 \\ 0 & \frac{1-a_1(\sigma)}{1+a_1(\sigma)}\end{array} \right] = \left[\begin{array}{cc} \mu_0(\sigma) & 0 \\ 0 & \mu_1(\sigma)\end{array} \right].\end{equation*}
To simplify notation, we write $\Sigma(\sigma)=\Sigma$.
Using the SVD of $\bA(\sigma)$ we can write
\begin{equation*}\bx(\sigma) = \bU (\bSigma^2 + \lambda^2 \bI)^{-1} \bSigma \bU^{\top} \bb.\end{equation*}
and use this expression of the solution $\bx(\sigma)$ in \eqref{eq: Tik-problem}. Then, defining $\widetilde{\bb} = \bU^\top \bb$, we have that
\begin{align*}
\| \bA(\sigma) \bx(\sigma) - \bb\|^2 = 
\| \bU (\bSigma (\bSigma^2 + \lambda^2 \bI)^{-1} \bSigma  - \bI) \bU^{\top} \bb\|^2= \| (\bSigma (\bSigma^2 + \lambda^2 \bI)^{-1} \bSigma  - \bI) \widetilde{\bb}\|^2, 
\end{align*}
where the last equality is a consequence of $\bU$ being an orthogonal matrix. Similarly,
\[ \| \bx(\sigma)\|^2 = \| (\bSigma^2 + \lambda^2 \bI)^{-1} \bSigma \widetilde{\bb} \|^2.\]
By explicit computations, 
\[ (\bSigma^2 + \lambda^2 \bI)^{-1} \bSigma = \left[ \begin{array}{cc} \frac{1}{1+\lambda^2} & 0 \\  0 & \frac{\mu_1(\sigma)}{\mu_1(\sigma)^2+\lambda^2} \end{array}\right]\]
and
\[ \bSigma (\bSigma^2 + \lambda^2 \bI)^{-1} \bSigma  - \bI =  \left[ \begin{array}{cc} \frac{-\lambda^2}{1+\lambda^2} & 0 \\  0 & \frac{-\lambda^2}{\mu_1(\sigma)^2+\lambda^2} \end{array}\right].\] 
Therefore, if $\widetilde{\bb} = \left[ \begin{array}{c} \beta_1 \\ \beta_2\end{array}\right]$, we obtain
the following expression for the function $\varphi(\sigma)$ we want to minimize:

\begin{align*} 
\varphi(\sigma) &= \frac{1}{2} \| \bA(\sigma) \bx(\sigma) - \bb\|^2 + \frac{\lambda^2}{2} \| \bx(\sigma)\|^2 \\
&= \frac{1}{2} \left(\frac{\lambda^4}{\left(1 +\lambda^2\right)^2} \beta_1^2 +  \frac{\lambda^4}{\left(\mu_1(\sigma)^2+\lambda^2\right)^2} \beta_2^2 \right) \\ & \quad + \frac{1}{2}\Bigg(\frac{\lambda^2}{\left(1 +\lambda^2\right)^2} \beta_1^2 +  \frac{\lambda^2\mu_1(\sigma)^2}{\left(\mu_1(\sigma)^2+\lambda^2\right)^2} \beta_2^2 \Bigg)\\
&= \frac{1}{2}\frac{\lambda^2}{1 +\lambda^2} \beta_1^2 +  \frac{1}{2}\frac{\lambda^2}{\mu_1(\sigma)^2 +\lambda^2} \beta_2^2.   
\end{align*}

Note that by defining $a_1(0) = 0$, we have that $a_1(\sigma)$ is differentiable and 
\[a_1'(\sigma) = \frac{1}{\sigma^3}\mbox{exp}\left(-\frac{1}{2 \sigma^2}\right) \hbox{ if }  \sigma\ne 0  \ \hbox{ and } a_1'(0) = 0.\]
Then, we can differentiate $\mu_1'(\sigma)$ and its derivative is given by 
\[
\mu_1'(\sigma)= \frac{-2a_1'(\sigma)}{(1+a_1(\sigma))^2}.
\]
Now, we can differentiate $\varphi(\sigma)$ and obtain
\[
\varphi'(\sigma)
=
-\frac{\lambda^2\,\mu_1(\sigma)\mu_1'(\sigma)}
{\left(\mu_1(\sigma)^2+\lambda^2\right)^2}\,\beta_2^2.
\]
Assume $\beta_2 \ne 0$ (otherwise, $\varphi(\sigma)$ is constant). Then the sign of
$\varphi'(\sigma)$ is determined by the sign of $\mu_1(\sigma)\mu_1'(\sigma)$.
Using the expression above,
\[
\mu_1(\sigma)\mu_1'(\sigma)
=
\frac{-2\left(1-a_1(\sigma)\right)a_1'(\sigma)}{(1+a_1(\sigma))^3}.
\]
Since $0\le a_1(\sigma)<1$ for every $\sigma$, we conclude that the zeros of $\varphi'(\sigma)$ are those of $a_1'(\sigma)$; then, $\varphi$ has a unique critical point at $\sigma=0$. It is clear from the expression above that, for every $\sigma$, the sign of $\varphi'(\sigma)$ coincides with the sign of $a_1'(\sigma)$. Noticing that $a_1'(\sigma) <0$ if $\sigma<0$ and $a_1'(\sigma)>0$ if $\sigma>0$, we conclude that $\varphi$ attains its minimum at $\sigma=0$.

This explicit $2\times2$ calculation illustrates the phenomenon observed in our
numerical experiments: without regularization of the blur parameter, the reduced
problem can (and, for typical data, does) favor the degenerate width $\sigma=0$.
Importantly, the $2\times2$ matrix $\bA$ in this example is not only symmetric
Toeplitz, but also circulant, and its diagonalizing matrix $\bU$ is precisely the
$2\times2$ discrete Fourier transform. This structural observation motivates an
extension of the analysis to problems with periodic boundary conditions in one and
two dimensions, where the system matrices are likewise diagonalizable by the
discrete Fourier transform \cite{deblurring}. We develop this general framework in
Appendix~\ref{appendix}. 

The numerical experiments reported in Section~\ref{section:NumericalExamples} make direct use of the
two-dimensional version of this analysis. In that setting, periodic boundary
conditions lead to a block-circulant with circulant blocks (BCCB) matrix
$\bA(\sigma)$ \cite{deblurring}, which is diagonalizable by the two-dimensional
discrete Fourier transform. The theoretical analysis in Appendix~\ref{appendix}
is carried out for the case $\bL=\bI$ in order to isolate and highlight the
behavior of the reduced functional with respect to the blur parameter $\sigma$.
In the numerical experiments, however, we employ a Laplacian-based regularization
operator $\bL$ to promote smoothness in the reconstructed solution. While this
choice changes the structure of the regularization term, the qualitative
behavior predicted by the analysis is still observed.

\subsection{VarPro for Regularized Separable Nonlinear Least Squares Problems}
For our RSNLS problem \eqref{eq: nlls_regy}, the function that defines the original minimization problem is
\begin{equation}\label{eq: def_calF}
\calF(\bx, \by)= \frac{1}{2}\| \bK(\by) \bx - \bd \|^2 + \mathcal{R}(\by),
\end{equation}
where
\begin{equation}\label{eq: defKd}
\bK(\by)=\left[\begin{array}{c} \bA(\by) \\ \lambda \bL\end{array}\right], \quad \bd=\left[\begin{array}{c} \bb \\ \boldsymbol{0} \end{array}\right], \end{equation}
and $\mathcal{R}(\by)$ is a continuous differentiable function to be chosen.
Under the assumption $\calN(\bA(\by))\cap \calN(\bL) = \{\boldsymbol{0}\}$, it follows that $\bK(\by)$ has maximal rank, and so, for a fixed $\by$,
\[\bx(\by)=\arg\min_{\bx}  \frac{1}{2}\| \bK(\by) \bx - \bd \|^2 + \mathcal{R}(\by) = \arg\min_{\bx}  \frac{1}{2}\| \bK(\by) \bx - \bd \|^2,\] 
and we have an explicit closed formula:
\[ \bx(\by) = \bK(\by)^{\dagger} \bd,\]
where $\bK(\by)^{\dagger}= (\bK(\by)^\top\bK(\by))^{-1}\bK^\top(\by)$ is the Moore-Penrose pseudoinverse of $\bK(\by)$.
We then obtain that
\[ \varphi(\by) =  \calF(\bK(\by)^{\dagger} \bd, \by) = \frac{1}{2}\| \bK(\by) \bK(\by)^{\dagger} \bd - \bd \|^2  + \mathcal{R}(\by) =  \frac{1}{2}\|  -\calP_{\bK(\by)}^\perp \bd \|^2 + \mathcal{R}(\by),\]
where $\mathcal{P}_{\bK(\by)}^\perp = \mathbf{I} - \bK(\by)\bK(\by)^\dagger$ is the orthogonal projector onto the null space of $\bK(\by)^\top$. If we define $$\bff(\by)=  -\calP_{\bK(\by)}^\perp \bd,$$ then we have
\begin{equation}\label{eq: def_varphi}
 \varphi(\by) = \dfrac{1}{2}\| \bff(\by) \|^2 + \mathcal{R}(\by). 
\end{equation}

The following theorem shows that the variable projection approach allows us to solve the original minimization problem. Its proof is based on the original proof from~\cite{golub1973differentiation}, which we adapt to our setting.

\begin{theorem} Assume that $\calN(\bA(\by))\cap \calN(\bL) = \{\boldsymbol{0}\}$ for every $\by$ in a domain $\Omega\subset \mathbb{R}^r$ and that the map $\by \mapsto \bA(\by)$ is differentiable in $\Omega$. 
    Let $\calF(\bx,\by)$ and $\varphi(\by)$ be the functions introduced in \eqref{eq: def_calF} and \eqref{eq: def_varphi}, respectively. 
    \begin{enumerate}
        \item[(1)] If $(\bx^*, \by^*)$ is a global minimizer of $\calF(\bx, \by)$ in $\bbR^n \times \Omega$, then $\bx^* = \bK(\by^*)^{\dagger}\bd$ and $\by^*$ is a global minimizer of $\varphi(\by)$ in $\Omega$.
        \item[(2)] If $\by^*$ is a critical point (resp.~a global minimizer) of $\varphi(\by)$ in $\Omega$ and $\bx^*= \bK(\by^*)^{\dagger}\bd$, then $(\bx^*, \by^*)$ is a critical point (resp.~a global minimizer) of $\calF(\bx, \by)$ in $\bbR^n \times \Omega$.
    \end{enumerate}
\end{theorem}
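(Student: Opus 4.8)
The plan is to base everything on the single structural fact that, for each fixed $\by \in \Omega$, the map $\bx \mapsto \calF(\bx,\by)$ is a strictly convex quadratic (because $\bK(\by)$ has full column rank, guaranteed by $\calN(\bA(\by)) \cap \calN(\bL) = \{\bzero\}$) plus the $\bx$-independent constant $\mathcal{R}(\by)$. Hence it has the unique minimizer $\bx(\by) = \bK(\by)^\dagger \bd$, and $\varphi(\by) = \calF(\bx(\by),\by) = \min_\bx \calF(\bx,\by)$, with $\calF(\bx,\by) > \varphi(\by)$ whenever $\bx \neq \bx(\by)$. This yields the inequality $\calF(\bx,\by) \geq \varphi(\by)$ for all $(\bx,\by)$, with equality iff $\bx = \bx(\by)$, which I would use repeatedly. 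I would also record that the residual at $\bx(\by)$ is $\bK(\by)\bx(\by) - \bd = \bK(\by)\bK(\by)^\dagger\bd - \bd = -\calP_{\bK(\by)}^\perp\bd = \bff(\by)$.

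For the global-minimizer statements I would argue by these sandwich inequalities. For (1), if $(\bx^*,\by^*)$ minimizes $\calF$, then fixing $\by = \by^*$ forces $\bx^*$ to minimize $\calF(\cdot,\by^*)$, so $\bx^* = \bK(\by^*)^\dagger\bd$ by uniqueness and $\calF(\bx^*,\by^*) = \varphi(\by^*)$; then for any $\by$, $\varphi(\by) = \calF(\bx(\by),\by) \geq \calF(\bx^*,\by^*) = \varphi(\by^*)$, so $\by^*$ minimizes $\varphi$. For the global-minimizer half of (2), if $\by^*$ minimizes $\varphi$ and $\bx^* = \bK(\by^*)^\dagger\bd$, then for any $(\bx,\by)$ one has $\calF(\bx,\by) \geq \varphi(\by) \geq \varphi(\by^*) = \calF(\bx^*,\by^*)$. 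Both directions are essentially immediate once the key inequality is in place.

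The substantive part is the critical-point half of (2), which requires differentiation and is where I would follow Golub and Pereyra. First I would observe that $\nabla_\bx \calF(\bx^*,\by^*) = \bK(\by^*)^\top(\bK(\by^*)\bx^* - \bd) = \bzero$ automatically, since $\bx^* = \bK(\by^*)^\dagger\bd$ satisfies the normal equations. It then remains to show $\nabla_\by \calF(\bx^*,\by^*) = \nabla\varphi(\by^*)$, so that $\nabla\varphi(\by^*) = \bzero$ forces $\nabla\calF(\bx^*,\by^*) = \bzero$. Using the residual identity above, the $y_i$-component of $\nabla_\by\calF$ at $(\bx^*,\by^*)$ is $\bff(\by^*)^\top (\partial\bK/\partial y_i)\bx^* + \partial\mathcal{R}/\partial y_i$, while the corresponding component of $\nabla\varphi$ is $\bff(\by^*)^\top (\partial\bff/\partial y_i) + \partial\mathcal{R}/\partial y_i$. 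Since $\mathcal{R}$ contributes the same term to both, the task reduces to the identity $\bff^\top(\partial\bff/\partial y_i) = \bff^\top(\partial\bK/\partial y_i)\bx^*$ at $\by^*$.

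Establishing this identity is the main obstacle. I would differentiate $\bff = -\calP_\bK^\perp\bd$ using the Golub-Pereyra formula for the derivative of the orthogonal projector, $\partial\calP_\bK^\perp/\partial y_i = -\calP_\bK^\perp(\partial\bK/\partial y_i)\bK^\dagger - (\calP_\bK^\perp(\partial\bK/\partial y_i)\bK^\dagger)^\top$, valid for full-rank $\bK$ (whose differentiability follows from that of $\by\mapsto\bA(\by)$ and the invertibility of $\bK^\top\bK$). Substituting and using the symmetry and idempotence of $\calP_\bK^\perp$ together with $\bK^\dagger\bd = \bx^*$, the first term contributes exactly $\bff^\top(\partial\bK/\partial y_i)\bx^*$. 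The second term vanishes because it carries the factor $\bff^\top(\bK^\dagger)^\top = (\bK^\dagger\bff)^\top$, and $\bK^\dagger\bff = -\bK^\dagger\calP_\bK^\perp\bd = \bzero$ by the pseudoinverse identity $\bK^\dagger\calP_\bK^\perp = \bK^\dagger(\bI - \bK\bK^\dagger) = \bK^\dagger - \bK^\dagger\bK\bK^\dagger = \bzero$. This cancellation, which hinges on $\bff$ lying in the left null space of $\bK$, is the crux; beyond it I expect only careful bookkeeping of transposes and a remark that the full-rank hypothesis makes $\bK^\dagger$, $\calP_\bK^\perp$, $\bff$, and hence $\varphi$ differentiable throughout $\Omega$.
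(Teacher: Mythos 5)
Your proposal is correct and takes essentially the same route as the paper: the same sandwich inequality $\calF(\bx,\by)\ge\varphi(\by)$ (with equality only at $\bx=\bK(\by)^\dagger\bd$) settles part (1) and the global-minimizer half of (2), and the critical-point half of (2) is proved exactly as in the paper, using the Golub--Pereyra projector-derivative formula for $\partial\bff/\partial y_j$ together with the cancellation $\bK(\by)^\dagger\calP_{\bK(\by)}^\perp=\bzero$ to match $\nabla_\by\calF(\bx^*,\by^*)$ with $\nabla\varphi(\by^*)$.
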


\begin{proof}
For every $\by \in \Omega$, because of the assumption $\calN(\bA(\by))\cap \calN(\bL) = \{\boldsymbol{0}\}$, we have  
\[\bK(\by)^{\dagger} \bd = \arg\min_\bx \frac{1}{2}\| \bK(\by) \bx - \bd\|^2.\]
This implies that, for $\by\in \Omega$, the following inequality holds for every $\bx\in \bbR^n$: 
\begin{equation}\label{eq: varphi-calF}
\varphi(\by) = \dfrac{1}{2} \| \bK(\by) \bK(\by)^{\dagger} \bd - \bd\|^2 +\calR(\by)\le \dfrac{1}{2} \| \bK(\by) \bx- \bd\|^2 +\calR(\by) = \calF(\bx, \by),
\end{equation}
and the inequality is strict for $\bx \ne \bK(\by)^{\dagger} \bd$.

\medskip
\noindent (1) Assume $(\bx^*, \by^*)$ is a global minimizer of $\calF(\bx, \by)$ in $\bbR^n \times \Omega$. By the inequality above, $\varphi(\by^*)\le \calF(\bx^*, \by^*)$. Now, if $\bx^{\diamond}= \bK(\by^*)^{\dagger}\bd$, from the definition of $\varphi$ and the fact that $(\bx^*, \by^*)$ is a global minimizer of $\calF(\bx, \by)$, we have
$\varphi(\by^*) = \calF(\bx^{\diamond}, \by^*) \ge \calF(\bx^*, \by^*)$. Then, the equality $\varphi(\by^*) = \calF(\bx^*, \by^*)$ holds and, consequently, $\bx^* = \bK(\by^*)^{\dagger}\bd$.

Therefore, for every $\by_0\in \Omega$, if $\bx_0= \bK(\by_0)^\dagger \bd$, we have $\varphi(\by_0) = \calF(\bx_0, \by_0) \ge \calF(\bx^*, \by^*) = \varphi(\by^*)$, which implies that $\by^*$ is a global minimizer of $\varphi(\by)$ in $\Omega$.

\medskip
\noindent (2) Let $\by^*\in \Omega$ be a critical point of $\varphi(\by)$ and $\bx^* = \bK(\by^*)^{\dagger} \bd$. In order to show that $(\bx^*, \by^*)$ is a critical point of $\calF$, we compute the gradient $\nabla \calF(\bx^*, \by^*)$. 
 First, notice that differentiating with respect to the variables $\bx$ we obtain:
\[\nabla_\bx \calF(\bx^*, \by^*) = \bK(\by^*)^\top (\bK(\by^*) \bx^* - \bd) = - \bK(\by^*)^\top \calP_{\bK(\by^*)}^\perp \bd = \boldsymbol{0}.\]
Now, for $j=1,\dots, r$,
\[\dfrac{\partial \calF}{\partial y_j} (\bx, \by)  = \left(\dfrac{\partial \bK}{\partial y_j}(\by)\bx\right)^\top (\bK(\by)\bx- \bd)+ \dfrac{\partial \calR}{\partial y_j}(\by).\]
Taking into account that $\bx^* = \bK(\by^*)^{\dagger} \bd$, we obtain
\begin{equation}\label{eq: diffcalF}
\dfrac{\partial \calF}{\partial y_j} (\bx^*, \by^*) = -(\bK(\by^*)^{\dagger} \bd )^\top \left(\dfrac{\partial \bK}{\partial y_j}(\by^*)\right)^\top \calP_{\bK(\by*)}^\perp \bd + \dfrac{\partial \calR}{\partial y_j}(\by^*). 
\end{equation}

On the other hand,
\[\dfrac{\partial \varphi (\by) }{\partial y_j} = \left( \dfrac{\partial \bff(\by)}{\partial y_j}  \right)^\top \bff(\by) + \dfrac{\partial \calR (\by)}{\partial y_j}, \]
where $\bff(\by) = -\calP_{\bK(\by)}^\perp \bd$.
According to \cite{Espanol_2023}, 
\begin{equation}\label{eq: Jac_columns}
    \dfrac{\partial \bff(\by)}{\partial y_j} =  \left(\calP_{\bK(\by)}^\perp \dfrac{\partial \bK(\by)}{\partial y_j} \bK(\by)^\dagger + (\calP_{\bK(\by)}^\perp \dfrac{\partial \bK(\by)}{\partial y_j} \bK(\by)^\dagger)^\top \right) \bd.\end{equation}
Therefore,
\begin{align*}
\dfrac{\partial \varphi (\by) }{\partial y_j} & = - \bd^\top \left((\calP_{\bK(\by)}^\perp \dfrac{\partial \bK(\by)}{\partial y_j} \bK(\by)^\dagger)^\top+\calP_{\bK(\by)}^\perp \dfrac{\partial \bK(\by)}{\partial y_j} \bK(\by)^\dagger  \right) \calP_{\bK(\by)}^\perp \bd +\dfrac{\partial \calR (\by)}{\partial y_j} \\
&= - \bd^\top (\calP_{\bK(\by)}^\perp \dfrac{\partial \bK(\by)}{\partial y_j} \bK(\by)^\dagger)^\top \calP_{\bK(\by)}^\perp \bd +\dfrac{\partial \calR (\by)}{\partial y_j} 
\end{align*}
since $\bK(\by)^\dagger  \calP_{\bK(\by)}^\perp =0$. By computing the transpose in the first term, and using that $(\calP_{\bK(\by)}^\perp)^\top = \calP_{\bK(\by)}^\perp$ and $(\calP_{\bK(\by)}^\perp)^2 =\calP_{\bK(\by)}^\perp$, 
\begin{align}\label{eq: diffvarphi}
\dfrac{\partial \varphi (\by) }{\partial y_j} &= - \bd^\top (\bK(\by)^\dagger)^\top  \left( \dfrac{\partial \bK(\by)}{\partial y_j} \right)^\top(\calP_{\bK(\by)}^\perp)^\top \calP_{\bK(\by)}^\perp \bd +\dfrac{\partial \calR (\by)}{\partial y_j}   \nonumber \\ 
&=  - (\bK(\by)^\dagger \bd)^\top \left( \dfrac{\partial \bK(\by)}{\partial y_j} \right)^\top \calP_{\bK(\by)}^\perp \bd +\dfrac{\partial \calR (\by)}{\partial y_j}  .
\end{align}
From \eqref{eq: diffcalF} and \eqref{eq: diffvarphi}, since $\by^*$ is a critical point of $\varphi(\by)$, it follows that
\[\dfrac{\partial \calF}{\partial y_j} (\bx^*, \by^*)=\dfrac{\partial \varphi} {\partial y_j}  (\by^*) =0. \]
We conclude that $\nabla \calF(\bx^*, \by^*)=0$, that is, $(\bx^*, \by^*)$ is a critical point of $\calF$.

If $\by^*$ is a global minimizer of $\varphi(\by)$ in $\Omega$, for $(\bx_0, \by_0)\in \bbR^n\times \Omega$, by using inequality \eqref{eq: varphi-calF} we deduce that $\calF(\bx_0, \by_0)\ge \varphi(\by_0) \ge\varphi(\by^*) = \calF(\bx^*, \by^*)$. Therefore, $(\bx^*, \by^*)$ is a global minimizer of $\calF(\bx, \by)$ in $\bbR^n\times \Omega$.
\end{proof}

\section{Quasi-Newton Method}\label{section:quasi-Newton}
To solve the reduced minimization problem~\eqref{eq: def_varphi} corresponding to the RSNLS problem~\eqref{eq: nlls_regy}, we apply a quasi-Newton method whose iterations are defined by 
\begin{equation*}
\by^{(k+1)} = \by^{(k)} + \bs^{(k)}, \,k = 0,1,2,...,
\end{equation*}
where $\bs^{(k)}$ is the solution of the system of equations
\begin{equation*}
\bH(\by^{(k)})\bs^{(k)} =  -\nabla \varphi(\by^{(k)}).
\end{equation*}
The right-hand side of the system contains the negative of the gradient of $\varphi(\by)$, which is defined by
\[ \nabla \varphi(\by) = \bJ^\top_{\mathbf{f}}(\by) \mathbf{f}(\by)+ \nabla\mathcal{R}(\by),\]
where $\bJ_{\mathbf{f}}(\by)$ is the Jacobian of $\mathbf{f}$, whose columns are 
$\dfrac{\partial \bff(\by)}{\partial y_j}$ (see \eqref{eq: Jac_columns}). The matrix of the system is an approximation of the Hessian $\nabla^2 \varphi(\by)$ defined as
\begin{equation}\label{eq: approxHessian}
 \bH(\by): =\bJ^\top_{\mathbf{f}} (\by) \bJ_{\mathbf{f}}(\by) + \nabla^2 \mathcal{R}(\by).
\end{equation}
Algorithm~\ref{Alg:RGenVarPro} summarizes our approach.

\begin{algorithm}[ht]
\caption{\texttt{RGenVarPro}}
\label{Alg:RGenVarPro}
\begin{algorithmic}[1]
\STATE {\bf Input:} A map $\by \mapsto \bA(\by)$, $\bL$, $\bb$, $\by^{(0)}$, $\lambda$, and $\mathcal{R}(\by)$ 
\FOR{$k = 0, 1, \dots$ until a stopping criterion is satisfied}
    \STATE $\bx^{(k)} = \left(\bA(\by^{(k)})^\top\bA(\by^{(k)})+\lambda^2 \bL^\top \bL\right)^{-1}\bA(\by^{(k)})^\top\bb$
    \STATE $\bff^{(k)} = \left[\begin{array}{c} \bA(\by^{(k)}) \\ \lambda \bL\end{array}\right] \bx^{(k)}-\left[\begin{array}{c} \bb \\ \mathbf{0} \end{array}\right]$\;
    \STATE Compute the Jacobian $\bJ_{\bff}(\by^{(k)})$ and the approximate Hessian $\bH(\by^{(k)})$\;
    \STATE $\bs^{(k)}=  -\bH(\by^{(k)})^{-1}\Big(\bJ^\top_{\mathbf{f}}(\by^{(k)}) \mathbf{f}^{(k)}+ \nabla\mathcal{R}(\by^{(k)})\Big)$\;
    \STATE $\by^{(k+1)} = \by^{(k)} + \bs^{(k)}$\;
\ENDFOR    
\end{algorithmic}
\end{algorithm}

\subsection{Local Convergence} Now we show the local convergence of Algorithm~\ref{Alg:RGenVarPro}. We adapt the results in \cite{espanol2025local} for this setting of separable nonlinear least squares problems of the form \eqref{eq: nlls_regy} with $\calR(\by)$ being convex and twice differentiable, and the linear variable $\bx$ being solved exactly.

Let $\by^*$ be a solution of the reduced problem \eqref{eq: minvarphi} and $\{\by^{(k)}\}_{k \geq 0}$ the iterates generated by Algorithm~\ref{Alg:RGenVarPro}. For $\varepsilon>0$, let $B(\by^*, \varepsilon)$ denote the open ball of radius $\varepsilon$ centered at $\by^*$. The following lemmas are needed for the main local convergence result that follows.
\begin{lemma}
Assume that the Jacobian $\bJ_{\bf f}(\by)$ is Lipschitz continuous in $B(\by^*, \varepsilon)$, i.e., there exists $L_\bff > 0$ such that
\begin{equation*}
\|\bJ_{\bf f}(\by_1) - \bJ_{\bf f}(\by_2)\| \leq L_\bff \|\by_1 - \by_2\|, \quad \mbox{for all } \,\by_1, \by_2 \in B(\by^*, \varepsilon). 
\end{equation*}
Also, suppose $\calR(\by)$ is $C^2$ and $\nabla^2 \calR(\by)$ is Lipschitz continuous with constant $L_\calR$. Then, the approximate Hessian $\bH(\by)$ introduced in \eqref{eq: approxHessian} is also Lipschitz continuous on $B(\by^*, \varepsilon)$ with constant $2 K L_\bff + L_\calR$, where $K := \sup_{\by \in \overline{B(\by^*, \varepsilon)}} \|\bJ_{\bf f}(\by)\|$.
\end{lemma}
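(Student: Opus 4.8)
The plan is to split $\bH(\by)$ according to its two summands and bound each separately, exploiting that the regularization Hessian already comes equipped with a Lipschitz constant. Writing $\bH(\by) = \bJ_{\bff}^\top(\by)\bJ_{\bff}(\by) + \nabla^2\calR(\by)$ and applying the triangle inequality for the operator norm,
\begin{equation*}
\|\bH(\by_1) - \bH(\by_2)\| \le \|\bJ_{\bff}^\top(\by_1)\bJ_{\bff}(\by_1) - \bJ_{\bff}^\top(\by_2)\bJ_{\bff}(\by_2)\| + \|\nabla^2\calR(\by_1) - \nabla^2\calR(\by_2)\|,
\end{equation*}
so that the $\calR$ term contributes exactly $L_\calR\|\by_1-\by_2\|$ by hypothesis, and it remains only to handle the Gauss--Newton term $\bJ_{\bff}^\top\bJ_{\bff}$.

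For that term, the key is the standard add-and-subtract trick. I would write
\begin{equation*}
\bJ_{\bff}^\top(\by_1)\bJ_{\bff}(\by_1) - \bJ_{\bff}^\top(\by_2)\bJ_{\bff}(\by_2) = \bJ_{\bff}^\top(\by_1)\big(\bJ_{\bff}(\by_1)-\bJ_{\bff}(\by_2)\big) + \big(\bJ_{\bff}^\top(\by_1)-\bJ_{\bff}^\top(\by_2)\big)\bJ_{\bff}(\by_2),
\end{equation*}
take operator norms, and invoke submultiplicativity together with the invariance of the spectral norm under transposition, $\|\bJ_{\bff}^\top(\by)\| = \|\bJ_{\bff}(\by)\|$. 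Bounding each factor $\|\bJ_{\bff}(\by_i)\|$ by $K$ and each difference $\|\bJ_{\bff}(\by_1)-\bJ_{\bff}(\by_2)\|$ by $L_\bff\|\by_1-\by_2\|$ through the assumed Lipschitz continuity of the Jacobian yields the bound $2KL_\bff\|\by_1-\by_2\|$. Combining the two estimates produces the claimed constant $2KL_\bff + L_\calR$.

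One point I would verify at the outset is that $K$ is finite, so that the bound is meaningful: since $\bJ_{\bff}$ is Lipschitz on $B(\by^*,\varepsilon)$, it is in particular bounded and uniformly continuous there, hence extends continuously to the compact closure $\overline{B(\by^*,\varepsilon)}$, on which the supremum defining $K$ is attained and finite. There is no substantial obstacle in this argument; it is a routine operator-norm estimate, and the only thing requiring care is the bookkeeping of the two occurrences of $K$ so that the factor of $2$ emerges correctly. The decomposition above is precisely what forces the constant to be $2KL_\bff$ rather than something quadratic in $K$, since each of the two summands carries exactly one Jacobian difference paired with one bounded Jacobian factor.
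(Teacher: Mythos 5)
Your proof is correct and follows essentially the same route as the paper: the same triangle-inequality split of $\bH$ into the Gauss--Newton and regularization terms, the same add-and-subtract decomposition of $\bJ_{\bff}^\top(\by_1)\bJ_{\bff}(\by_1) - \bJ_{\bff}^\top(\by_2)\bJ_{\bff}(\by_2)$, and the same bounds yielding the constant $2KL_\bff + L_\calR$. Your additional verification that $K$ is finite (via Lipschitz continuity extending $\bJ_{\bff}$ to the compact closure) is a worthwhile detail the paper leaves implicit, but it does not change the argument.
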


\begin{proof} Let $\by_1, \by_2 \in B(\by^*, \varepsilon)$. Then, we have that
\begin{align}\label{ineq: jac1}
\|\bH(\by_1) - \bH(\by_2)\|&=\|\bJ^\top_{\bf f}(\by_1) \bJ_{\bf f}(\by_1) + \nabla^2 \calR(\by_1) - \bJ^\top_{\bf f}(\by_2) \bJ_{\bf f}(\by_2) - \nabla^2 \calR(\by_2)\| \nonumber \\
&\leq \|\bJ^\top_{\bf f}(\by_1) \bJ_{\bf f}(\by_1) - \bJ^\top_{\bf f}(\by_2) \bJ_{\bf f}(\by_2)\| + \|\nabla^2 \calR(\by_1) - \nabla^2 \calR(\by_2)\|.
\end{align}
We now bound from above the first term of \eqref{ineq: jac1}:
\begin{align*} 
\|\bJ^\top_{\bf f}(\by_1) \bJ_{\bf f}(\by_1) - &\bJ^\top_{\bf f}(\by_2) \bJ_{\bf f}(\by_2)\|\\
&= \|\bJ^\top_{\bf f}(\by_1)(\bJ_{\bf f}(\by_1) - \bJ_{\bf f}(\by_2)) + (\bJ^\top_{\bf f}(\by_1) - \bJ^\top_{\bf f}(\by_2)) \bJ_{\bf f}(\by_2)\| \\
&\leq \|\bJ^\top_{\bf f}(\by_1)\| \|\bJ_{\bf f}(\by_1) - \bJ_{\bf f}(\by_2)\| + \|\bJ_{\bf f}(\by_2)\| \|\bJ^\top_{\bf f}(\by_1) - \bJ^\top_{\bf f}(\by_2)\| \\
&\leq 2 K L_\bff \|\by_1 - \by_2\|.
\end{align*}
The second term of \eqref{ineq: jac1} is bounded by $L_\calR \|\by_1 - \by_2\|$. Hence, the result follows.
\end{proof}

\begin{lemma}\label{lem: Hinverse}
Assume the same conditions as in the previous lemma, that is, that the Jacobian $\bJ_{\bf f}(\by)$ is Lipschitz continuous in $B(\by^*, \varepsilon)$, $\calR(\by)$ is $C^2$, and $\nabla^2 \calR(\by)$ is Lipschitz with constant $L_\calR$. In addition, assume that $\bH(\by^*)$ is invertible. Then, there exists $0 < \delta \leq \varepsilon$ such that for all $\by \in B(\by^*, \delta)$, we have that
$\|\bH(\by)\| \leq 2 \|\bH(\by^*)\|$,
$\bH(\by)$ is invertible, and $\|\bH(\by)^{-1}\| \leq 2 \|\bH(\by^*)^{-1}\|.$
\end{lemma}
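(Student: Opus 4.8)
The plan is to combine the Lipschitz continuity of $\bH$ established in the previous lemma with a standard Banach perturbation (Neumann series) argument. Write $L_\bH := 2KL_\bff + L_\calR$ for the Lipschitz constant of $\bH$ on $B(\by^*,\varepsilon)$ furnished by that lemma, so that $\|\bH(\by) - \bH(\by^*)\| \leq L_\bH\|\by - \by^*\|$ for every $\by \in B(\by^*, \varepsilon)$. Since $\bH(\by^*)$ is invertible, both $\|\bH(\by^*)\|$ and $\|\bH(\by^*)^{-1}\|$ are strictly positive, and this is precisely what lets me convert smallness of $\|\by - \by^*\|$ into each of the three required estimates.

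For the norm bound I would simply use the triangle inequality together with the Lipschitz estimate, giving $\|\bH(\by)\| \leq \|\bH(\by^*)\| + L_\bH\|\by - \by^*\|$; requiring $L_\bH\|\by - \by^*\| \leq \|\bH(\by^*)\|$ then forces $\|\bH(\by)\| \leq 2\|\bH(\by^*)\|$.

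For invertibility and the inverse bound, which is the heart of the argument, I would factor $\bH(\by) = \bH(\by^*)(\bI + \bE)$ with $\bE := \bH(\by^*)^{-1}(\bH(\by) - \bH(\by^*))$, so that $\|\bE\| \leq \|\bH(\by^*)^{-1}\|\, L_\bH\, \|\by - \by^*\|$. Whenever $\|\bE\| \leq 1/2$, the Neumann series shows that $\bI + \bE$ is invertible with $\|(\bI + \bE)^{-1}\| \leq (1 - \|\bE\|)^{-1} \leq 2$; hence $\bH(\by)$ is invertible as a product of invertible matrices, and from $\bH(\by)^{-1} = (\bI + \bE)^{-1}\bH(\by^*)^{-1}$ I obtain $\|\bH(\by)^{-1}\| \leq 2\|\bH(\by^*)^{-1}\|$.

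Finally I would set $\delta := \min\{\varepsilon,\ \|\bH(\by^*)\|/L_\bH,\ (2 L_\bH\|\bH(\by^*)^{-1}\|)^{-1}\}$, interpreting $\delta = \varepsilon$ in the degenerate case $L_\bH = 0$, where $\bH$ is constant on $B(\by^*,\varepsilon)$ and all three claims are immediate. This choice makes the two smallness conditions $L_\bH\delta \leq \|\bH(\by^*)\|$ and $L_\bH\delta\,\|\bH(\by^*)^{-1}\| \leq 1/2$ hold simultaneously for every $\by \in B(\by^*, \delta)$. There is no serious obstacle here, as the statement is a routine perturbation estimate; the one point to keep straight is that $\delta$ must satisfy all three smallness requirements at once while still respecting $\delta \leq \varepsilon$, so that the Lipschitz bound inherited from the previous lemma remains applicable throughout.
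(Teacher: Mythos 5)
Your proposal is correct and follows essentially the same route as the paper: the same Lipschitz bound $L_\bH = 2KL_\bff + L_\calR$ inherited from the previous lemma, the same two smallness thresholds on $\|\by-\by^*\|$, and the same final $\delta$ as the minimum of the three quantities. The only cosmetic difference is that you unpack the key step via an explicit Neumann-series factorization $\bH(\by) = \bH(\by^*)(\bI+\bE)$, whereas the paper bounds $\|\bI - \bH(\by^*)^{-1}\bH(\by)\|$ by $1/2$ and cites Banach's Lemma, which is the same argument.
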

\begin{proof}
By the previous lemma, $\bH(\by)$ is Lipschitz continuous, so we can write
\begin{equation*}
\|\bH(\by)\| - \|\bH(\by^*)\| \leq \|\bH(\by) - \bH(\by^*)\| \leq L_H \|\by - \by^*\|
\end{equation*}
for some constant $L_H$. Choosing $\delta_1 =\min\{\|\bH(\by^*)\|/L_H, \varepsilon\}$, for all $\by \in B(\by^*, \delta_1)$, we obtain $\|\bH(\by)\| \leq 2 \|\bH(\by^*)\|$. 

To establish the second inequality, we write
\begin{equation*}
\|\bI - \bH(\by^*)^{-1} \bH(\by)\| = \|\bH(\by^*)^{-1} (\bH(\by^*) - \bH(\by))\| \leq L_H \|\bH(\by^*)^{-1}\|\|\by - \by^*\|,
\end{equation*}
and choose
$\delta_2 = \min\{ 1/ (2 L_H \|\bH(\by^*)^{-1}\|), \varepsilon\}.$
Then, for any $\by \in B(\by^*, \delta_2)$, we have that
\begin{equation*}
\|\bI - \bH(\by^*)^{-1} \bH(\by)\| \leq \frac{1}{2}.
\end{equation*}
and therefore, we can apply Banach’s Lemma~\cite[Theorem 1.2.1]{kelley1995iterative}  to the matrices $\bH(\by^*)^{-1}$ and $\bH(\by)$, and conclude that, for all  $\by \in B(\by^*, \delta_2)$, we have that $\bH(\by)$ is invertible and
$\|\bH(\by)^{-1}\| \leq 2 \|\bH(\by^*)^{-1}\|.$ The lemma follows taking $\delta = \min\{\delta_1, \delta_2\}$.
\end{proof}

\begin{theorem}\label{thm: error bound}
Let $\by^*$ be a local minimizer of $\varphi(\by) = \frac{1}{2} \|\bff(\by)\|^2 + \calR(\by)$. Assume that $\bJ_{\bf f}(\by)$ is Lipschitz continuous and has full column rank in a neighborhood $B(\by^*, \varepsilon)$, and that $\calR$ is convex, $C^2$,  and has a Lipschitz continuous Hessian. Define the error $\be(\by) = \by - \by^*$ and let ${\be}_k = \be(\by^{(k)})$ denote the error at the $k$th iteration. If $ \|\bH(\by^*)^{-1}\|\|\nabla^2\varphi(\by^*)-\bH(\by^*)\| < 1/2$, then there exist constants $K > 0$ and $\delta > 0$ such that, for all $\by \in B(\by^*, \delta)$, the iterates $\by^{(k)}$ produced by Algorithm \texttt{RGenVarPro} with initial vector $\by^{(0)}=\by$ satisfy
\begin{equation*}
\|\be_{k+1}\| \leq \left(K \|\be_{k}\| + 2 \|\bH(\by^*)^{-1}\|\|\nabla^2 \varphi(\by^*) - \bH(\by^*)\| \right) \|\be_{k}\|.
\end{equation*}
\end{theorem}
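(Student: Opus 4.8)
The plan is to derive a one-step error recursion and reduce everything to estimating how far the approximate Hessian $\bH(\by^{(k)})$ is from the true Hessian $\nabla^2\varphi$ along the segment joining $\by^{(k)}$ to $\by^*$. Since $\by^*$ is a local minimizer of the differentiable $\varphi$, it is a critical point, so $\nabla\varphi(\by^*)=\boldsymbol{0}$. Writing $\bH_k=\bH(\by^{(k)})$ and using the update $\by^{(k+1)}=\by^{(k)}-\bH_k^{-1}\nabla\varphi(\by^{(k)})$ together with $\be_{k+1}=\by^{(k+1)}-\by^*$, I would first establish
\[
\be_{k+1} = \bH_k^{-1}\bigl(\bH_k\be_k - (\nabla\varphi(\by^{(k)}) - \nabla\varphi(\by^*))\bigr).
\]
Because $\varphi$ is $C^2$ near $\by^*$, the fundamental theorem of calculus applied to $\nabla\varphi$ gives $\nabla\varphi(\by^{(k)})-\nabla\varphi(\by^*)=\int_0^1\nabla^2\varphi(\by^*+t\be_k)\,dt\,\be_k$, whence
\[
\be_{k+1} = \bH_k^{-1}\Bigl(\int_0^1 \bigl[\bH_k - \nabla^2\varphi(\by^*+t\be_k)\bigr]\,dt\Bigr)\be_k.
\]

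The core of the argument is then to bound the integrand uniformly. I would split it as
\[
\bH_k - \nabla^2\varphi(\by^*+t\be_k) = \underbrace{[\bH_k - \bH(\by^*)]}_{(\mathrm{I})} + \underbrace{[\bH(\by^*) - \nabla^2\varphi(\by^*)]}_{(\mathrm{II})} + \underbrace{[\nabla^2\varphi(\by^*) - \nabla^2\varphi(\by^*+t\be_k)]}_{(\mathrm{III})}.
\]
Term $(\mathrm{I})$ is controlled by the first lemma, which shows $\bH$ is Lipschitz: $\|\bH_k-\bH(\by^*)\|\le L_H\|\be_k\|$, where $L_H$ is the Lipschitz constant of $\bH$ used in Lemma~\ref{lem: Hinverse}. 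Term $(\mathrm{II})$ is exactly the neglected second-order quantity $\|\nabla^2\varphi(\by^*)-\bH(\by^*)\|$, which enters the final bound as a constant. Term $(\mathrm{III})$ is where Lipschitz continuity of $\nabla^2\varphi$ is invoked, giving $\|(\mathrm{III})\|\le L_\varphi t\|\be_k\|$; integrating in $t$ contributes a factor $L_\varphi/2$. Collecting the three estimates yields $\int_0^1\|\bH_k-\nabla^2\varphi(\by^*+t\be_k)\|\,dt\le (L_H+L_\varphi/2)\|\be_k\|+\|\nabla^2\varphi(\by^*)-\bH(\by^*)\|$.

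To finish, I would invoke Lemma~\ref{lem: Hinverse}. The hypotheses that $\bJ_{\bf f}$ has full column rank and $\calR$ is convex make $\bH(\by^*)=\bJ_{\bf f}(\by^*)^\top\bJ_{\bf f}(\by^*)+\nabla^2\calR(\by^*)$ symmetric positive definite, hence invertible, so on a smaller ball $B(\by^*,\delta)$ one has $\|\bH_k^{-1}\|\le 2\|\bH(\by^*)^{-1}\|$. Multiplying the integrand bound by this factor and setting $K:=2\|\bH(\by^*)^{-1}\|(L_H+L_\varphi/2)$ produces exactly the claimed inequality. A remaining point is to ensure the iterates never leave $B(\by^*,\delta)$ so that Lemma~\ref{lem: Hinverse} keeps applying at every step: after shrinking $\delta$ if necessary, the hypothesis $\|\bH(\by^*)^{-1}\|\|\nabla^2\varphi(\by^*)-\bH(\by^*)\|<1/2$ forces the factor $K\|\be_k\|+2\|\bH(\by^*)^{-1}\|\|\nabla^2\varphi(\by^*)-\bH(\by^*)\|$ to be strictly below $1$ whenever $\|\be_k\|<\delta$, so $\|\be_{k+1}\|<\|\be_k\|$ and the ball is forward invariant by induction.

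The step I expect to be the main obstacle is controlling term $(\mathrm{III})$. The stated hypotheses give Lipschitz continuity only of $\bJ_{\bf f}$ and of $\nabla^2\calR$, whereas bounding $\|\nabla^2\varphi(\by^*)-\nabla^2\varphi(\by^*+t\be_k)\|$ by $L_\varphi t\|\be_k\|$ requires Lipschitz continuity of the full Hessian $\nabla^2\varphi=\bJ_{\bf f}^\top\bJ_{\bf f}+\sum_i f_i\nabla^2 f_i+\nabla^2\calR$. Since $\bH=\bJ_{\bf f}^\top\bJ_{\bf f}+\nabla^2\calR$ is already Lipschitz by the first lemma, this reduces to Lipschitz control of the neglected term $\sum_i f_i\nabla^2 f_i$, which needs $\bff$ to be $C^2$ with a Lipschitz second derivative. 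I would note that this extra smoothness is implicit in the very appearance of $\nabla^2\varphi(\by^*)$ in the statement and is guaranteed in our applications, where $\by\mapsto\bA(\by)$ is smooth (for instance, the analytic dependence of the Gaussian blur on $\sigma$); alternatively, if only $\varphi\in C^2$ is assumed, $(\mathrm{III})$ can be weakened to an $o(\|\be_k\|)$ continuity estimate, at the cost of replacing the explicit $K\|\be_k\|$ term by a quantity that still vanishes as $\be_k\to\boldsymbol{0}$, which preserves the local convergence conclusion.
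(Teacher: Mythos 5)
Your proof is correct, and it takes a genuinely different route from the paper's. The paper exploits the separable structure of $\nabla\varphi = \bJ_{\bf f}^\top\bff + \nabla\calR$: it inserts the stationarity identity $\bJ_{\bf f}(\by^*)^\top\bff(\by^*)+\nabla\calR(\by^*)=\boldsymbol{0}$ into the error recursion and Taylor-expands $\bff$, $\bJ_{\bf f}$, and $\nabla\calR$ separately about $\by^*$, with the constant $\|\nabla^2\varphi(\by^*)-\bH(\by^*)\|$ emerging from the structural identity $\nabla^2\varphi(\by^*)=\bH(\by^*)+\bJ'_{\bf f}(\by^*)^\top\bff(\by^*)$. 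You instead treat $\varphi$ as a black box and run the classical perturbed-Newton analysis: the fundamental theorem of calculus applied to $\nabla\varphi$, followed by the splitting $(\mathrm{I})+(\mathrm{II})+(\mathrm{III})$. Your version is shorter and applies verbatim to any Lipschitz, locally invertible Hessian approximation $\bH$, not just the Gauss--Newton-plus-$\nabla^2\calR$ one; what the paper's structural route buys is weaker smoothness, since it needs only a quadratic Taylor remainder for $\bJ_{\bf f}$ at the single point $\by^*$, whereas you need $\nabla^2\varphi$ to exist and be Lipschitz on a whole neighborhood. On the obstacle you flag: you are right that Lipschitz continuity of $\nabla^2\varphi$ does not follow from the stated hypotheses, but the paper's own proof carries the same implicit requirement --- it invokes $\|\bJ_{\bf f}(\by^{(k)})-\bJ_{\bf f}(\by^*)-\bJ'_{\bf f}(\by^*)\be_k\|\le T_{\bJ_\bff}\|\be_k\|^2$, which likewise presupposes second-order differentiability of $\bff$ beyond Lipschitz $\bJ_{\bf f}$ --- so your added assumption is of the same nature as the one the paper uses silently, and both hold in the intended applications where $\by\mapsto\bA(\by)$ is smooth. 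One precision regarding your fallback remedy: under mere continuity of $\nabla^2\varphi$, term $(\mathrm{III})$ is bounded by a modulus of continuity $\omega(\|\be_k\|)$, which is $o(1)$ but not $O(\|\be_k\|)$; this still yields local convergence when $2\|\bH(\by^*)^{-1}\|\|\nabla^2\varphi(\by^*)-\bH(\by^*)\|<1$, but it does not recover the theorem's explicit $K\|\be_k\|$ term, so the Lipschitz-Hessian assumption (or the paper's pointwise Taylor-remainder version) is genuinely needed for the statement as written. Your remaining steps --- positive definiteness of $\bH(\by^*)$ from full column rank plus convexity, the use of Lemma~\ref{lem: Hinverse} to get $\|\bH_k^{-1}\|\le 2\|\bH(\by^*)^{-1}\|$, and the forward-invariance induction keeping the iterates in $B(\by^*,\delta)$ --- match the paper's treatment exactly.
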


\begin{proof} 
By hypothesis, $\bJ_{\bf f}(\by^*)$ has full column rank and $\mathcal{R}(\by)$ is convex. Consequently, the Hessian approximation $\bH(\by^*) =\bJ^\top_{\mathbf{f}} (\by^*) \bJ_{\mathbf{f}}(\by^*) + \nabla^2 \mathcal{R}(\by^*)$
is strictly positive definite and hence invertible. By Lemma \ref{lem: Hinverse}, there exists $0 < \delta \le \varepsilon$ such that $\bH(\by)$ is invertible for all $\by \in B(\by^*,\delta)$. The iterates of the algorithm are then defined, for any initial vector $\by^{(0)} \in B(\by^*,\delta)$, by
\begin{equation}\label{eq: iteration}
\by^{(k+1)} = \by^{(k)} - 
\bH(\by^{(k)})^{-1}
\nabla \varphi(\by^{(k)})
\end{equation}
provided that $\by^{(k)}\in B(\by^*,\delta)$, i.e., $\|\be_k\|<\delta $, for all $k$.

Subtracting $\by^*$ from both sides in \eqref{eq: iteration} and rearranging terms, we obtain
\begin{small}
\begin{align*}
    \be_{k+1} &= \be_k - \bH(\by^{(k)})^{-1} \left( \bJ_{\bf f}(\by^{(k)})^\top \bff(\by^{(k)}) + \nabla \mathcal{R}(\by^{(k)}) \right) \\
    &= \bH(\by^{(k)})^{-1} \left( \bH(\by^{(k)})\be_k - \bJ_{\bf f}(\by^{(k)})^\top \bff(\by^{(k)}) - \nabla \mathcal{R}(\by^{(k)}) \right) \\
    &= \bH(\by^{(k)})^{-1} \left( (\bJ_{\bf f}(\by^{(k)})^\top \bJ_{\bf f}(\by^{(k)}) + \nabla^2 \mathcal{R}(\by^{(k)}))\be_k - \bJ_{\bf f}(\by^{(k)})^\top \bff(\by^{(k)}) - \nabla \mathcal{R}(\by^{(k)}) \right) \\
    &= \bH(\by^{(k)})^{-1} \bigg(
    \underbrace{\bJ_{\bf f}(\by^{(k)})^\top \left( \bJ_{\bf f}(\by^{(k)})\be_k - \bff(\by^{(k)}) \right)}_{A}
    + \underbrace{ \nabla^2 \mathcal{R}(\by^{(k)})\be_k - \nabla \mathcal{R}(\by^{(k)}) }_{B} \bigg).
\end{align*}
\end{small}

Since $\by^*$ is a stationary point, we have that
$\bJ_{\bf f}(\by^*)^\top \bff(\by^*) + \nabla \mathcal{R}(\by^*) = 0$, and
so
\begin{align*}
A+ B &= \left(A+\bJ_{\bf f}(\by^*)^\top \bff(\by^*) \right)+ \left(B+\nabla \mathcal{R}(\by^*) \right).
\end{align*}

We first rewrite $A+\bJ_{\bf f}(\by^*)^\top \bff(\by^*)$ as follows:

\begin{align}\label{eq: A}
A +\bJ_{\bf f}(\by^*)^\top \bff(\by^*) &= \bJ_{\bf f}(\by^{(k)})^\top \left( \bJ_{\bf f}(\by^{(k)})\be_k - \bff(\by^{(k)})+ \bff(\by^*)\right) \notag \\
& \qquad {} - \left(\bJ_{\bf f}(\by^{(k)})^\top -\bJ_{\bf f}(\by^*)^\top \right) \bff(\by^*). 
\end{align}
Note that
\begin{equation*}
\bJ_{\bf f}(\by^{(k)})\be_k - \bff(\by^{(k)}) +\bff(\by^*) 
=   \left(\bJ_{\bf f}(\by^{(k)})- \bJ_{\bf f}(\by^*)\right)\be_k + \left(\bJ_{\bf f}(\by^*)\be_k+\bff(\by^*) -\bff(\by^{(k)})\right) .
\end{equation*}
By Taylor's theorem applied to $\bff$,
\[\|\bJ_{\bf f}(\by^*)\be_k+\bff(\by^*) -\bff(\by^{(k)}) \| \le T_\bff \| \be_k \|^2 \]
for a constant $T_\bff$.

Thus,
\begin{align}\label{eq: bound A1}
\qquad &\left\| \bJ_{\bf f}(\by^{(k)})\be_k  -  \bff(\by^{(k)}) + \bff(\by^*) \right\| \notag \\
& \qquad \leq \| \bJ_{\bf f}(\by^{(k)}) - \bJ_{\bf f}(\by^*) \| \|\be_k\| +  \|\bJ_{\bf f}(\by^*)\be_k+\bff(\by^*) -\bff(\by^{(k)}) \|
\leq C_1 \|\be_k\|^2,
\end{align}
where $C_1 = L_\bff + T_\bff$, with $L_\bff$ being the Lipschitz constant of $\bJ_{\bf f}$.

Using the Taylor expansion $\bJ_{\bf f}(\by^*) + \bJ'_{\bf f}(\by^*)\be_k$  of $\bJ_{\bf f}$ around $\by^*$, where \(\bJ'_{\bf f}(\by^*)\) denotes the derivative of the Jacobian (i.e., the third-order tensor whose action on a direction vector gives the directional derivative of \(\bJ_{\bf f}\) at \(\by^*\)),
and the identity
\[
\nabla^2\varphi(\by^*)
= \bJ_{\bf f}(\by^*)^\top \bJ_{\bf f}(\by^*) + \bJ'_{\bf f}(\by^*)^\top\bff(\by^*) + \nabla^2 \mathcal{R}(\by^*) = \bH(\by^*) + \bJ'_{\bf f}(\by^*)^\top\bff(\by^*),
\]
we obtain
\begin{align*}
\left(\bJ_{\bf f}(\by^{(k)}) - \bJ_{\bf f}(\by^*) \right)^\top\bff(\by^*) &=  \be_k^\top \bJ'_{\bf f}(\by^*)^\top \bff(\by^*)  \\ & \qquad\qquad {} + \left(\bJ_{\bf f}(\by^{(k)}) - \bJ_{\bf f}(\by^*) - \bJ'_{\bf f}(\by^*)\be_k\right)^\top\bff(\by^*) \\
&= \be_k^\top \left(\nabla^2\varphi(\by^*)-\bH(\by^*)\right)  \\ & \qquad\qquad {} +\left(\bJ_{\bf f}(\by^{(k)}) - \bJ_{\bf f}(\by^*) - \bJ'_{\bf f}(\by^*)\be_k\right)^\top\bff(\by^*).
\end{align*}
Then, $\|\bJ_{\bf f}(\by^{(k)}) - \bJ_{\bf f}(\by^*) - \bJ'_{\bf f}(\by^*)\be_k\|\le T_{\bJ_\bff} \|\be_k\|^2$ for a constant $T_{\bJ_\bff}$ and, therefore,
\begin{align}\label{eq: bound A2}
    & \left\|(\bJ_{\bf f}(\by^{(k)}) -  \bJ_{\bf f}(\by^*))^\top \bff(\by^*)\right\| \nonumber\\
    & \quad\qquad \le \|\nabla^2\varphi(\by^*)-\bH(\by^*)\| \|\be_k\|+ \|\bJ_{\bf f}(\by^{(k)}) - \bJ_{\bf f}(\by^*) - \bJ'_{\bf f}(\by^*)\be_k\| \|\bff(\by^*)\|   \nonumber \\
    & \quad \qquad \le \|\nabla^2\varphi(\by^*)-\bH(\by^*)\| \|\be_k\|+  T_{\bJ_\bff} \|\bff(\by^*)\| \|\be_k\|^2.   
\end{align}

From identity \eqref{eq: A}, and the bounds in \eqref{eq: bound A1} and \eqref{eq: bound A2} we obtain
\begin{align}\label{eq: bound A}
    \| A+ \bJ_{\bf f}(\by^*)^\top \bff(\by^*)\| 
    & \le  C_1 \| \bJ_{\bf f}(\by^{(k)})\| \|\be_k\|^2 +\|\nabla^2\varphi(\by^*)-\bH(\by^*)\| \|\be_k\| \nonumber \\ & \qquad \qquad \qquad\qquad \qquad \qquad \qquad \qquad \qquad{} + T_{\bJ_\bff} \|\bff(\by^*)\| \|\be_k\|^2 \nonumber\\ 
    & \le (C_1 \| \bJ_{\bf f}(\by^{(k)})\| + C_2) \|\be_k\|^2 +\|\nabla^2\varphi(\by^*)-\bH(\by^*)\| \|\be_k\|
\end{align}
for a constant $C_2$.

Next, we consider the term $B+ \nabla \mathcal{R}(\by^{*})$. Using the Taylor expansion of $\nabla \mathcal{R}$ around $\by^*$, we have that
\begin{align*}
B + \nabla \mathcal{R}(\by^*)  & = \nabla^2 \mathcal{R}(\by^{(k)})\be_k - \nabla \mathcal{R}(\by^{(k)}) +\nabla \mathcal{R}(\by^*)  \\
&=  \nabla^2 \mathcal{R}(\by^{(k)}) \be_k   - \nabla^2 \mathcal{R}(\by^*)\be_k +\left(\nabla \mathcal{R}(\by^*) + \nabla^2 \mathcal{R}(\by^*)\be_k - \nabla \mathcal{R}(\by^{(k)}) \right) \\
&=  (\nabla^2 \mathcal{R}(\by^{(k)}) -\nabla^2 \mathcal{R}(\by^*))\be_k +\left(\nabla \mathcal{R}(\by^*) + \nabla^2 \mathcal{R}(\by^*)\be_k - \nabla \mathcal{R}(\by^{(k)}) \right).
\end{align*}
By the Lipschitz assumption on the Hessian of $\mathcal{R}$, we have
\begin{equation}\label{eq: bound B1}
\| \nabla^2 \mathcal{R}(\by^{(k)}) -\nabla^2 \mathcal{R}(\by^*)\| \le L_{\cal R} \| \be_k\|,
\end{equation}
with $L_{\cal R}$ the Lipschitz constant. In addition, 
\begin{equation}\label{eq: bound B2}
\|\nabla \mathcal{R}(\by^*) + \nabla^2 \mathcal{R}(\by^*)\be_k - \nabla \mathcal{R}(\by^{(k)}) \|\le T_{\cal R} \| \be_k\|^2
\end{equation}
for a constant $T_{\cal R}$. From inequalities \eqref{eq: bound B1} and \eqref{eq: bound B2}, it follows that
\begin{equation}\label{eq: bound B}
\| B+ \nabla \mathcal{R}(\by^*)\| \le (L_{\cal R}+ T_{\cal R}) \| \be_k\|^2.
\end{equation}

Now, combining the bounds in \eqref{eq: bound A} and \eqref{eq: bound B}, we obtain
\begin{align*}
\| A+B\| &\le 
\|A+\bJ_{\bf f}(\by^*)^\top \bff(\by^*) \|+ \|B+\nabla \mathcal{R}(\by^*) \| \\
&\le (C_1 \| \bJ_{\bf f}(\by^{(k)})\| + C_2)\|\be_{k}\|^2 + \|\nabla^2 \varphi(\by^*) - \bH(\by^*)\| \|\be_{k}\| + (L_{\cal R}+ T_{\cal R}) \| \be_k\|^2 \\
&=(C_1 \| \bJ_{\bf f}(\by^{(k)})\| + C_3 )\|\be_{k}\|^2 +  \|\nabla^2 \varphi(\by^*) - \bH(\by^*)\| \|\be_{k}\|
\end{align*}
for a constant $C_3$.

From the recurrence of the error $\be_{k+1} = \bH(\by^{(k)})^{-1} (A+B)$, we deduce that
\begin{align*}
\|\be_{k+1}\| & \leq \|\bH(\by^{(k)})^{-1}\| \| A + B\| \\
& \leq 2 \|\bH(\by^*)^{-1}\|
\left( \left( C_1 \|\bJ_{\bf f}(\by^{(k)})\| + C_3 \right) \|\be_k\|^2
+ \left\| \nabla^2 \varphi(\by^*) - \bH(\by^*) \right\| \|\be_k\| \right) \\
& \leq 2 \|\bH(\by^*)^{-1}\|
\left( C \|\be_k\|^2
+ \left\| \nabla^2 \varphi(\by^*) - \bH(\by^*) \right\| \|\be_k\| \right) 
\end{align*}
for a suitable constant $C$, since $ \|\bJ_{\bf f}(\by)\|$ is bounded in $\overline{B(\by^*, \varepsilon)}$ by the continuity assumption on $\bJ_\bff$.
Choosing $K = 2 \|\bH(\by^*)^{-1}\| C$, we conclude that
\begin{equation*}
\|\be_{k+1}\| \leq  K \|\be_k\|^2 +  2 \|\bH(\by^*)^{-1}\|\left\| \nabla^2 \varphi(\by^*) - \bH(\by^*) \right\|  \|\be_k\| .
\end{equation*}

Finally, from the inequality 
\[ \|\be_{k+1}\|\leq 2 \|\bH(\by^*)^{-1}\|
\left( C \|\be_k\|
+ \left\| \nabla^2 \varphi(\by^*) - \bH(\by^*) \right\| \right) \|\be_k\|, \]
since  $\|\bH(\by^*)^{-1}\| \left\| \nabla^2 \varphi(\by^*) - \bH(\by^*) \right\| < \frac{1}{2} $,
by taking $\delta >0$ sufficiently small, we have that whenever $\|\be_k\|<\delta$, it follows that $\|\be_{k+1}\|\le \|\be_k\|$. In particular, this implies that $\by^{(k+1)}\in B(\by^*, \delta)$.
\end{proof}

For problems with low nonlinearity, that is, problems that are technically nonlinear but exhibit nearly linear behavior in the region of interest, the difference between the Hessian and its approximation is often negligible. In such cases, Algorithm~\ref{Alg:RGenVarPro} can achieve superlinear convergence rates, and even quadratic convergence under certain conditions. We formalize this in the following corollary.

\begin{corollary}
Under the same assumptions and notation as in Theorem \ref{thm: error bound}, define
$
\eta := 2 \|\bH(\by^*)^{-1}\|\|\nabla^2 \varphi(\by^*) - \bH(\by^*)\|.
$
If $0 \leq \eta < 1$ and $\|\be_0\| < \min\{ \delta, \frac{1 - \eta}{2K}\},$
then, for all $k\ge 0$,
\[
\|\be_k\| < \left( \frac{1 + \eta}{2} \right)^k \|\be_0\|,
\]
guaranteeing linear convergence. In particular, if $\nabla^2 \varphi(\by^*) = \bH(\by^*)$, then we have quadratic convergence.
\end{corollary}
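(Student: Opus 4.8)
The plan is to run a straightforward induction on $k$ using the one-step bound established in Theorem~\ref{thm: error bound}, namely
\[
\|\be_{k+1}\| \le \bigl(K\|\be_k\| + \eta\bigr)\|\be_k\|,
\]
which is exactly the conclusion of that theorem once we substitute $\eta = 2\|\bH(\by^*)^{-1}\|\,\|\nabla^2\varphi(\by^*)-\bH(\by^*)\|$. Set $\rho := \tfrac{1+\eta}{2}$; since $0\le \eta < 1$ we have $\tfrac12 \le \rho < 1$. The goal is to show that the effective contraction factor $K\|\be_k\|+\eta$ never exceeds $\rho$, so that each step shrinks the error by at least the factor $\rho$.

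First I would record the well-definedness of the iterates. By hypothesis $\|\be_0\| < \min\{\delta, \tfrac{1-\eta}{2K}\}$, so in particular $\|\be_0\|<\delta$ and the first step is defined via Theorem~\ref{thm: error bound}. The induction hypothesis $\|\be_k\| \le \rho^{k}\|\be_0\| \le \|\be_0\| < \delta$ (using $\rho<1$) keeps $\by^{(k)}\in B(\by^*,\delta)$, so the one-step bound is legitimately applicable at every iteration. This maintenance of the iterates inside the ball is the one point that must be tracked carefully, but it follows automatically from the monotone decrease of $\|\be_k\|$ that the induction itself produces.

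For the inductive step, the key observation is that $\|\be_0\| < \tfrac{1-\eta}{2K}$ gives $K\|\be_k\| \le K\|\be_0\| < \tfrac{1-\eta}{2}$, and hence
\[
K\|\be_k\| + \eta < \frac{1-\eta}{2} + \eta = \frac{1+\eta}{2} = \rho.
\]
Combining this with the induction hypothesis $\|\be_k\| < \rho^{k}\|\be_0\|$ (strict for $k\ge 1$; at $k=0$ it holds with equality) in the one-step estimate yields
\[
\|\be_{k+1}\| \le (K\|\be_k\|+\eta)\,\|\be_k\| < \rho\,\rho^{k}\|\be_0\| = \rho^{k+1}\|\be_0\|,
\]
closing the induction and establishing linear convergence with rate $\rho = \tfrac{1+\eta}{2}$.

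Finally, for the quadratic case I would specialize to $\nabla^2\varphi(\by^*)=\bH(\by^*)$, so that $\eta = 0$ and the one-step bound collapses to $\|\be_{k+1}\| \le K\|\be_k\|^2$. Multiplying by $K$ gives $K\|\be_{k+1}\| \le (K\|\be_k\|)^2$; since the hypothesis becomes $\|\be_0\| < \tfrac{1}{2K}$, i.e.\ $K\|\be_0\| < \tfrac12 < 1$, iterating yields $K\|\be_k\| \le (K\|\be_0\|)^{2^{k}} \to 0$, which is the claimed quadratic rate. Since every step reduces to an elementary algebraic manipulation of the inequality from Theorem~\ref{thm: error bound}, I expect no genuine obstacle here; the only subtlety worth flagging is the base case $k=0$, where the asserted strict inequality degenerates to an equality and should be read as ``$\le$''.
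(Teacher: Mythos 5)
Your proof is correct and follows essentially the same route as the paper's: an induction on $k$ driven by the one-step bound $\|\be_{k+1}\|\le (K\|\be_k\|+\eta)\|\be_k\|$ from Theorem~\ref{thm: error bound}, with the key observation $K\|\be_k\|+\eta \le K\|\be_0\|+\eta < \tfrac{1+\eta}{2}$, and the specialization $\eta=0$ for the quadratic case. Your explicit tracking of the iterates inside $B(\by^*,\delta)$, the iterated bound $K\|\be_k\|\le (K\|\be_0\|)^{2^k}$, and the remark that the strict inequality at $k=0$ should be read as ``$\le$'' are minor refinements of, not departures from, the paper's argument.
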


\begin{proof}
By Theorem \ref{thm: error bound}, taking $\by^{(0)} \in B(\by^*, \delta)$, for all $k\ge 0$, we have
\[
\|\be_{k+1}\| \leq  \left( K \|\be_k\| + \eta \right) \|\be_k\|. 
\]
For $k = 0$, the assumption
$
 \|\be_0\| < \frac{1 - \eta}{2K}$ implies that
$K \|\be_0\| + \eta < \frac{1 + \eta}{2} < 1,$
so
\[
\|\be_1\| \leq \left( K \|\be_0\| + \eta \right) \|\be_0\| < \frac{1 + \eta}{2} \|\be_0\|.
\]
Assuming inductively that $\|\be_k\| < \left( \frac{1 + \eta}{2} \right)^k \|\be_0\|$, we have
\begin{align*}
\|\be_{k+1}\| \leq \left( K \|\be_k\| + \eta \right) \|\be_k\| 
& < \left( K \|\be_0\| + \eta \right) \left( \frac{1 + \eta}{2} \right)^k \|\be_0\| 
< \left( \frac{1 + \eta}{2} \right)^{k+1} \|\be_0\|.
\end{align*}
The first claim follows by induction.

If $\eta = 0$, then the recurrence in Theorem \ref{thm: error bound} simplifies to
$
\|\be_{k+1}\| \leq K \|\be_k\|^2,
$
which implies quadratic convergence.
\end{proof}

\section{Inexact RGenVarPro}\label{section:iRGenVarPro}
We extend the Inexact GenVarPro method introduced in \cite{espanol2025local} to address large-scale separable nonlinear least squares problems of the form \eqref{eq: nlls_regy}, now including a general regularization term $\mathcal{R}(\by)$. This extension reduces the computational cost of Algorithm~\ref{Alg:RGenVarPro} by approximating $\bx(\by)$ in Step~3 via an iterative solver.

Specifically, we adopt an LSQR-based approach for solving the linear subproblem, terminating the iterations when the following relative residual condition is met:
\begin{equation}\label{eq: stopping_criterion}
\frac{\|\bK(\by)^\top \br^{(i)}\|}{\|\br^{(i)}\| \|\bK(\by)\|} < \varepsilon,
\end{equation}
for a prescribed tolerance $\varepsilon > 0$, where $\br^{(i)}$ denotes the residual at the $i$-th LSQR iteration. This approach follows the strategy outlined in \cite{espanol2025local}.

At each outer iteration $k$, we construct an approximate Jacobian matrix $\bar{\bJ}^{(k)}$, whose $j$-th column is given by
\begin{small}
\begin{equation}\label{eq: Jappkj}
 [\bar\bJ^{(k)}]_j = \mathcal{P}^\perp_{\bK(\by^{(k)})} \left[ \begin{array}{c} \frac{\partial \bA}{\partial y_j}(\by^{(k)}) \\ \bf{0} \end{array} \right] \bar{\bx}^{(k)} + \left(\bK^\dagger(\by^{(k)})\right)^\top \left(\frac{\partial \bA}{\partial y_j}(\by^{(k)})\right)^\top \left(\bb - \bA(\by^{(k)}) \bar{\bx}^{(k)}\right),
\end{equation}
\end{small}
for $j = 1, \dots, r$, where $\bar{\bx}^{(k)}$ denotes the approximate solution to the regularized linear least squares problem
\[
\min_{\bx} \frac{1}{2}\|\bK(\by^{(k)})\bx - \bd\|^2.
\]
The LSQR solver is applied to this subproblem with a tolerance $\varepsilon^{(k)} > 0$, which is updated adaptively from an initial value $\varepsilon^{(0)}$. Using the approximate Jacobian $\bar{\bJ}^{(k)}$, we define a corresponding approximate Hessian at the $k$th iteration as
\begin{equation}\label{eq: defbarH}
\bar{\bH}^{(k)} := (\bar{\bJ}^{(k)})^\top \bar{\bJ}^{(k)} + \nabla^2 \mathcal{R}(\by^{(k)}).
\end{equation}
Additionally, the residual at iteration $k$ is defined as
\[
\bg^{(k)} = \bK(\by^{(k)}) \bar{\bx}^{(k)} - \bd.
\]
Then, the \texttt{iRGenVarPro} iteration is defined by 
\begin{equation}\label{eq: iGenVarPro}
\by^{(k+1)}=\by^{(k)} + \bt^{(k)} \mbox{ with } \bt^{(k)}= -(\bar{\bH}^{(k)})^{-1} \Big((\bar{\bJ}^{(k)})^\top \bg^{(k)}+ \nabla\mathcal{R}(\by^{(k)})\Big).
\end{equation}
Algorithm~\ref{Alg:I-GenVarPro} summarizes the \texttt{iRGenVarPro} method applied to problem \eqref{eq: nlls_regy}.

\begin{algorithm}[ht]
\caption{\texttt{iRGenVarPro}}
\label{Alg:I-GenVarPro}
\begin{algorithmic}[1]
\STATE {\bf Input:} A map $\by \mapsto \bA(\by)$, $\bL$, $\bb$, $\by^{(0)}$, and $\varepsilon^{(0)}>0$
\FOR{$k = 0, 1, \dots$ until a stopping criterion is satisfied}
    \STATE Compute $\bar \bx^{(k)}$ by applying the LSQR algorithm with stopping criterion \eqref{eq: stopping_criterion} for $\varepsilon=\varepsilon^{(k)}$ to the problem 
    $\min_{\bx} \frac{1}{2}\|\bK(\by^{(k)})\bx - \bd\|^2$ 
    \STATE $\bg^{(k)} = \bK(\by^{(k)})\bar \bx^{(k)}-\bd$\;
    \STATE Compute the approximate Jacobian matrix $\bar\bJ^{(k)}$ according to \eqref{eq: Jappkj}\;
    \STATE Compute the approximate Hessian $\bar{\bH}^{(k)}$ according to \eqref{eq: defbarH}\;
    \STATE $\bt^{(k)}=  -(\bar{\bH}^{(k)})^{-1} \Big((\bar{\bJ}^{(k)})^\top \bg^{(k)}+ \nabla\mathcal{R}(\by^{(k)})\Big)$\;
    \STATE $\by^{(k+1)} = \by^{(k)} + \bt^{(k)}$\;
    \STATE $\varepsilon^{(k+1)} = \varepsilon^{(k)}/2$\;
\ENDFOR    
\end{algorithmic}
\end{algorithm}

\subsection{Local Convergence} We now prove the local convergence of \texttt{iRGenVarPro}. First, we restate the following lemma, which follows from~\cite[Lemma 3.1]{espanol2025local}.

\begin{lemma}\label{lemma:approx_residual}
For a fixed $\by$, let $\bx$ be the (exact) solution of $\min_{\bx} \frac{1}{2}\|\bK(\by)\bx - \bd\|^2$ and $\bar \bx$ the approximate solution computed by the LSQR algorithm with stopping criterion
\begin{equation*}
\frac{\|\bK(\by)^\top \br^{(i)}\|}{\|\br^{(i)}\|\|\bK(\by)\|}<\varepsilon,
\end{equation*}
for a sufficiently small prescribed tolerance $\varepsilon>0$, with $\br^{(i)}$ being the residual at the $i$-th iteration. If $\br=\bK(\by)\bx- \bd$ and $ \bar \br = \bK(\by)\bar\bx-\bd$ are the corresponding residuals, then
\begin{equation*}
\|\br - \bar \br\| <   \frac{ 2 \kappa_2(\bK(\by))}{1-\varepsilon\, \kappa_2(\bK(\by))}\|\bb\|\varepsilon.
\end{equation*}
\end{lemma}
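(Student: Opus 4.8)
The plan is to exploit the fact that the exact least-squares solution satisfies the normal equations $\bK^\top \br = \bzero$, and to translate the LSQR stopping criterion, which controls $\|\bK^\top \bar\br\|$ relative to $\|\bar\br\|$, into a bound on the residual difference $\|\br - \bar\br\|$. Throughout I drop the argument and write $\bK = \bK(\by)$; I use that $\bd = [\bb;\,\bzero]$ gives $\|\bd\| = \|\bb\|$, and that $\bK$ has full column rank by the standing assumption $\calN(\bA(\by)) \cap \calN(\bL) = \{\bzero\}$, so that $\bK^\top\bK$ is invertible and $\bK^\dagger = (\bK^\top\bK)^{-1}\bK^\top$.

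First I would write $\br - \bar\br = \bK(\bx - \bar\bx)$ and premultiply by $\bK^\top$. Since $\bK^\top\br = \bzero$, this yields $-\bK^\top\bar\br = \bK^\top\bK(\bx - \bar\bx)$, hence $\bx - \bar\bx = -(\bK^\top\bK)^{-1}\bK^\top\bar\br$. Substituting back gives the key identity
\[
\br - \bar\br = -\bK(\bK^\top\bK)^{-1}\bK^\top\bar\br = -(\bK^\dagger)^\top\bK^\top\bar\br .
\]
Taking norms and using $\|(\bK^\dagger)^\top\| = \|\bK^\dagger\|$ (the spectral norm is transpose-invariant), I obtain $\|\br - \bar\br\| \le \|\bK^\dagger\|\,\|\bK^\top\bar\br\|$.

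Next I would invoke the stopping criterion at the terminating iteration, where $\br^{(i)} = \bar\br$, to get $\|\bK^\top\bar\br\| < \varepsilon\,\|\bK\|\,\|\bar\br\|$. Combining this with the previous inequality and $\kappa_2(\bK) = \|\bK\|\,\|\bK^\dagger\|$ produces the intermediate bound $\|\br - \bar\br\| < \varepsilon\,\kappa_2(\bK)\,\|\bar\br\|$. The remaining task is to eliminate the self-reference to $\bar\br$ on the right. For this I would bound $\|\bar\br\| \le \|\br\| + \|\br - \bar\br\|$ by the triangle inequality, and then $\|\br\| = \min_{\bx}\|\bK\bx - \bd\| \le \|\bK\bzero - \bd\| = \|\bb\|$. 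Substituting yields $\|\br - \bar\br\| < \varepsilon\kappa_2(\bK)\bigl(\|\bb\| + \|\br - \bar\br\|\bigr)$; collecting the $\|\br - \bar\br\|$ terms (valid once $\varepsilon\kappa_2(\bK) < 1$) gives
\[
\|\br - \bar\br\| < \frac{\varepsilon\,\kappa_2(\bK)}{1 - \varepsilon\,\kappa_2(\bK)}\,\|\bb\|,
\]
which is the claimed estimate; the factor $2$ in the statement merely provides slack over this tighter constant.

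The main obstacle is the step converting the residual-space stopping criterion into a bound on $\|\br - \bar\br\|$: it hinges on the identity $\br - \bar\br = -(\bK^\dagger)^\top\bK^\top\bar\br$, which relies essentially on full column rank (so that $\bK^\top\bK$ is invertible and $\bK^\dagger$ has the stated closed form) and on the exact orthogonality $\bK^\top\br = \bzero$ from the normal equations. The only other delicate point is the self-referential inequality for $\|\bar\br\|$: one must check that $\varepsilon$ is small enough that $\varepsilon\kappa_2(\bK) < 1$, so that the denominator $1 - \varepsilon\kappa_2(\bK)$ is positive — this is exactly what the ``sufficiently small $\varepsilon$'' hypothesis guarantees.
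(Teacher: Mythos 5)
Your proof is correct, and it is necessarily a different route from the paper's, because the paper does not actually prove this lemma: it only restates it as following from \cite[Lemma 3.1]{espanol2025local}. Your self-contained argument is sound at every step. The identity $\br - \bar{\br} = -\bK(\bK^\top\bK)^{-1}\bK^\top\bar{\br}$ follows from the normal equations $\bK^\top\br = \bzero$ and the full column rank of $\bK(\by)$ (guaranteed by the standing assumption $\calN(\bA(\by)) \cap \calN(\bL) = \{\bzero\}$); grouping the estimate as $\|(\bK^\dagger)^\top\|\,\|\bK^\top\bar{\br}\|$ --- rather than, say, $\|\bK\|\,\|(\bK^\top\bK)^{-1}\|\,\|\bK^\top\bar{\br}\|$, which would produce a $\kappa_2^2$ factor --- is the crux that lets the stopping criterion enter with a single factor of $\kappa_2(\bK(\by))$; and the self-referential step $\|\bar{\br}\| \le \|\br\| + \|\br - \bar{\br}\|$ with $\|\br\| \le \|\bd\| = \|\bb\|$ is resolved legitimately once $\varepsilon\,\kappa_2(\bK(\by)) < 1$, which is exactly what the ``sufficiently small $\varepsilon$'' hypothesis provides (the paper later enforces the stronger condition $\varepsilon^{(k)}\kappa < 1/2$ in the proof of Theorem~\ref{thm: main}). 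Your final constant $\kappa_2/(1-\varepsilon\kappa_2)$ is in fact sharper than the stated $2\kappa_2/(1-\varepsilon\kappa_2)$, so the lemma follows a fortiori; the factor $2$ is harmless slack inherited from the cited reference. As a side remark, you could avoid the self-referential step entirely by noting that LSQR initialized at $\bzero$ produces monotonically nonincreasing residual norms over nested Krylov subspaces, so $\|\bar{\br}\| \le \|\br^{(0)}\| = \|\bd\| = \|\bb\|$ directly, which yields the even cleaner bound $\|\br - \bar{\br}\| < \varepsilon\,\kappa_2(\bK(\by))\,\|\bb\|$; your route has the advantage of not relying on any property of LSQR beyond the terminating condition.
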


Now, we state the main result corresponding to the local convergence of \linebreak \texttt{iRGenVarPro}.
\begin{theorem}\label{thm: main} Let $\by^*$ be a minimizer of $\varphi(\by)$ and let $\Omega\subseteq \mathbb{R}^r$ be an open neighborhood  of $\by^*$. 

Assume that:
\begin{itemize}
\item there are constants $M>0$,  $\kappa>0$, and $\gamma >0$ such that $\max_j \left\|\frac{\partial \bA (\by)}{\partial y_j}\right\| \le M$, $\kappa_2(\bK(\by))\le \kappa$, and $\| \bK(\by)\| \ge \gamma$ for all $\by$ in $\Omega$, where $\bK(\by)$ is the matrix defined in \eqref{eq: defKd},
\item the Jacobian $\bJ= \bJ_{\bff}(\by)$ of $\bff(\by)$ is of full rank in $\Omega$ and satisfies the Lipschitz condition $\|\bJ(\by) - \bJ(\by^*) \|\le L \| \by - \by^\ast\|$ with a constant $L>0$ for all $\by$ in $\Omega$,
\item the regularization term $\calR$ is convex and $C^2$, and its Hessian satisfies the Lipschitz condition $\|\nabla^2 \mathcal{R}(\by) - \nabla^2 \mathcal{R}(\by^*) \|\le L_\mathcal{R} \| \by - \by^\ast\|$ for all $\by$ in $\Omega$.
\end{itemize}
Then, if $ \|\bH(\by^*)^{-1}\| \| \bJ(\by^*)^\top \bJ(\by^*) -\nabla^2\phi(\by^*)\| < 1/32$,  where $\phi(\by)=\frac{1}{2}\|\bff(\by)\|^2$, and $\|\by^{(0)}-\by^*\|$ is sufficiently small, one can choose adequate tolerances $\varepsilon^{(k)}>0$ for $k \ge 0$ so that the sequence $(\by^{(k)})_{k\ge 0}$ generated by the  \texttt{iRGenVarPro} method satisfies  $\|\by^{(k)}-\by^*\|\le \dfrac{1}{2^k}$ for every $k\in \mathbb{N}$.
\end{theorem}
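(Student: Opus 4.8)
The plan is to treat the \texttt{iRGenVarPro} iteration \eqref{eq: iGenVarPro} as an inexact perturbation of the exact \texttt{RGenVarPro} iteration \eqref{eq: iteration}, reusing the Taylor-expansion machinery of Theorem~\ref{thm: error bound} for the exact part while using Lemma~\ref{lemma:approx_residual} to quantify every quantity corrupted by the LSQR inner solve. Writing $\be_k = \by^{(k)} - \by^*$, the goal is an induction giving contraction by a factor strictly below $\tfrac12$, from which $\|\be_k\| \le 2^{-k}\|\be_0\| \le 2^{-k}$ follows once $\|\by^{(0)}-\by^*\|$ is small. The tolerances $\varepsilon^{(k)}$ will be chosen decaying like $2^{-k}$ (consistent with the halving in Algorithm~\ref{Alg:I-GenVarPro}), with $\varepsilon^{(0)}$ small enough that the inexactness contribution at each step fits inside the contraction margin.

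First I would establish the inner-solve estimates. From Lemma~\ref{lemma:approx_residual} together with the uniform bounds $\kappa_2(\bK(\by))\le \kappa$ and $\|\bK(\by)\|\ge\gamma$ on $\Omega$, I would derive $\|\bg^{(k)} - \bff(\by^{(k)})\| \le c_1\varepsilon^{(k)}$, and then, propagating this through the approximate Jacobian formula \eqref{eq: Jappkj} (which differs from the exact columns \eqref{eq: Jac_columns} only through $\bar\bx^{(k)}$ and the approximate residual, with $\max_j\|\partial\bA/\partial y_j\|\le M$ controlling constants), $\|\bar\bJ^{(k)} - \bJ(\by^{(k)})\| \le c_2\varepsilon^{(k)}$. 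Squaring yields $\|\bar\bH^{(k)} - \bH(\by^{(k)})\| \le c_3\varepsilon^{(k)}$ for the approximate Hessian \eqref{eq: defbarH}. Combining these bounds with the Lipschitz estimate and Banach's lemma argument of Lemma~\ref{lem: Hinverse}, I would conclude that for $\by^{(k)}\in B(\by^*,\delta)$ and $\varepsilon^{(k)}$ small, $\bar\bH^{(k)}$ is invertible with $\|(\bar\bH^{(k)})^{-1}\| \le 2\|\bH(\by^*)^{-1}\|$.

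Next I would expand the error recursion. Substituting $\bt^{(k)}$ into $\be_{k+1} = \be_k + \bt^{(k)}$ and inserting $\bar\bH^{(k)}\be_k$, I would write $\be_{k+1} = (\bar\bH^{(k)})^{-1}\big((A+B) + E_k\big)$, where $A+B$ is exactly the expression analyzed in Theorem~\ref{thm: error bound} (built from $\bff(\by^{(k)})$, $\bJ(\by^{(k)})$, $\bH(\by^{(k)})$) and the remainder $E_k = (\bar\bH^{(k)}-\bH(\by^{(k)}))\be_k - \big((\bar\bJ^{(k)})^\top\bg^{(k)} - \bJ(\by^{(k)})^\top\bff(\by^{(k)})\big)$ collects the LSQR perturbations, so that $\|E_k\| \le c_4\varepsilon^{(k)}$ by the previous step. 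The exact part reproduces the Theorem~\ref{thm: error bound} bound, contributing $(K\|\be_k\|+\eta)\|\be_k\|$ with $\eta = 2\|\bH(\by^*)^{-1}\|\|\nabla^2\varphi(\by^*)-\bH(\by^*)\|$; since $\|\bJ(\by^*)^\top\bJ(\by^*)-\nabla^2\phi(\by^*)\| = \|\nabla^2\varphi(\by^*)-\bH(\by^*)\|$, the hypothesis $<1/32$ forces $\eta < 1/16$. Multiplying $\|E_k\|$ by $\|(\bar\bH^{(k)})^{-1}\|$ gives an additive $c_5\varepsilon^{(k)}$, so
\begin{equation*}
\|\be_{k+1}\| \le \left(K\|\be_k\| + \eta\right)\|\be_k\| + c_5\,\varepsilon^{(k)}.
\end{equation*}

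To close the induction, shrink $\delta$ so that $\rho := K\delta + \eta < \tfrac12$, leaving a positive margin $\tfrac12-\rho$. Under the induction hypothesis $\|\be_k\| \le 2^{-k}\|\be_0\| \le \delta$, the recursion gives $\|\be_{k+1}\| \le \rho\,2^{-k}\|\be_0\| + c_5\varepsilon^{(k)}$; choosing $\varepsilon^{(k)} = \varepsilon^{(0)}2^{-k}$ with $\varepsilon^{(0)} \le (\tfrac12-\rho)\|\be_0\|/c_5$ yields $\|\be_{k+1}\| \le 2^{-(k+1)}\|\be_0\|$, which also keeps the next iterate in $B(\by^*,\delta)$ and, since $\|\be_0\|\le 1$, delivers the stated $\|\by^{(k)}-\by^*\| \le 2^{-k}$. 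The main obstacle is precisely the coupled bookkeeping of the second and third paragraphs: one must show that the LSQR error, accessible only through the residual bound of Lemma~\ref{lemma:approx_residual}, propagates \emph{linearly} with uniformly bounded constants through the nonlinear chain $\bar\bx^{(k)}\mapsto\bar\bJ^{(k)}\mapsto\bar\bH^{(k)}$, so that the two independent error sources — the Gauss--Newton/true-Hessian mismatch $\eta$ and the inner-solve inexactness — do not compound; the strengthened threshold $1/32$ (versus $1/2$ in Theorem~\ref{thm: error bound}) is exactly the slack that absorbs the extra constants these approximations introduce.
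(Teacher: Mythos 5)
Your proposal is correct and rests on the same overall mechanism as the paper's proof---an error recursion for the inexact quasi-Newton step, perturbation bounds linear in $\varepsilon^{(k)}$, a Banach-lemma estimate $\|(\bar\bH^{(k)})^{-1}\|\le 2\|\bH(\by^*)^{-1}\|$, and an induction with geometrically decaying tolerances---but you organize the decomposition differently. The paper expands the error into three pieces anchored at $\by^*$: the Taylor remainder of $\nabla\varphi$, the gradient-inexactness term $(\bJ^{(k)})^\top\bff^{(k)}-(\bar\bJ^{(k)})^\top\bg^{(k)}$, and the Hessian-mismatch term $\left[(\bar\bJ^{(k)})^\top\bar\bJ^{(k)}-\nabla^2\phi(\by^*)\right]\be_k$, where the last piece absorbs both the Gauss--Newton bias $\delta^*$ and the Jacobian inexactness through $\|\bar\bJ^{(k)}-\bJ^*\|\le C\varepsilon^{(k)}+L\|\be_k\|$; the key bound $\|\bJ^{(k)}-\bar\bJ^{(k)}\|\le C\varepsilon^{(k)}$ is imported from \cite[Lemma 3.2]{espanol2025local} rather than derived. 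You instead isolate the exact \texttt{RGenVarPro} expression $A+B$ of Theorem~\ref{thm: error bound} and collect every LSQR effect into $E_k=(\bar\bH^{(k)}-\bH(\by^{(k)}))\be_k-\big((\bar\bJ^{(k)})^\top\bg^{(k)}-\bJ(\by^{(k)})^\top\bff(\by^{(k)})\big)$. This modular split is what your route buys: Theorem~\ref{thm: error bound} is reused as a black box (none of the paper's explicit constants $\tfrac18$, $\tfrac1{16}$ reappear), and it makes transparent that the threshold $1/32$ is not sharp for the conclusion---your argument only needs $\eta=2\|\bH(\by^*)^{-1}\|\,\|\nabla^2\varphi(\by^*)-\bH(\by^*)\|$ bounded away from $1/2$. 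Your identity $\nabla^2\varphi(\by^*)-\bH(\by^*)=\nabla^2\phi(\by^*)-\bJ(\by^*)^\top\bJ(\by^*)$ is correct (the $\nabla^2\calR$ terms cancel), so the stated hypothesis does force $\eta<1/16$.

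Three remarks to make the writeup airtight. First, the one estimate you assert rather than prove, $\|\bar\bJ^{(k)}-\bJ(\by^{(k)})\|\le c_2\varepsilon^{(k)}$, is exactly the content of the lemma of \cite{espanol2025local} that the paper cites; if you want it self-contained, it does follow from Lemma~\ref{lemma:approx_residual} because \eqref{eq: Jappkj} differs from the exact columns \eqref{eq: Jac_columns} only through $\bar\bx^{(k)}$, and $\bx^{(k)}-\bar\bx^{(k)}=\bK(\by^{(k)})^\dagger(\br-\bar\br)$ with $\|\bK(\by^{(k)})^\dagger\|\le\kappa/\gamma$, so the residual bound propagates linearly with constants controlled by $M$, $\kappa$, $\gamma$, as you anticipated. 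Second, reusing Theorem~\ref{thm: error bound} imports its implicit smoothness (the quadratic Taylor remainder constants $T_\bff$ and $T_{\bJ_\bff}$ require more than the stated Lipschitz hypotheses), but the paper's own proof makes the identical implicit assumption (``by Taylor expansion and the smoothness of $\varphi$''), so you are not on weaker ground; also note that only the one-sided Lipschitz conditions relative to $\by^*$ are available here, which suffices both for the $A+B$ bound and for your Banach argument, since all comparisons are to quantities at $\by^*$. Third, a cosmetic fix: your induction target $\|\be_k\|\le 2^{-k}\|\be_0\|$ with tolerance $\varepsilon^{(0)}\le(\tfrac12-\rho)\|\be_0\|/c_5$ degenerates when $\be_0=\bzero$; targeting $\|\be_k\|\le\min\{\delta,2^{-k}\}$ and letting $\varepsilon^{(k)}$ scale with $\min\{\delta,2^{-(k+1)}\}$, as the paper does, removes the edge case.
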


\begin{proof}
This proof closely follows that of \cite[Theorem 3.3]{espanol2025local}. For $k\ge 0$, let $\be_k = \by^{(k)}- \by^*$. We will prove the stated upper bound on $\|\be_k\|$ by induction on $k$.

For $k\ge 0$, let $\bJ^{(k)}$ be the Jacobian at $\by^{(k)}$ and $\bar \bJ^{(k)}$ its approximation defined in \eqref{eq: Jappkj}. We also write
$$\bff^{(k)} = \bff(\by^{(k)}) = \bK(\by^{(k)}) \bx^{(k)} - \bd 
\quad \mbox{and} \quad 
\bg^{(k)} = \bK(\by^{(k)}) \bar \bx^{(k)} -\bd,$$
where $\bx^{(k)}=\bx(\by^{(k)})$ is the exact solution to  $\min_{\bx} \frac{1}{2}\|\bK(\by^{(k)})\bx - \bd\|^2$.
Let $\bJ^\ast=\bJ(\by^\ast)$ and $\bH^\ast=\bH(\by^*)$. Assume $\alpha>0$ is an upper bound for $\|\bJ^\ast\|$ and $\| \bar\bJ^{(k)}\|$ for all $k$. 
Set $\beta= \|(\bH^\ast)^{-1}\|$ and $\delta^\ast= \| (\bJ^\ast)^\top \bJ^\ast- \nabla^2 \phi (\by^\ast) \|$.

From the definition of the quasi-Newton iteration \eqref{eq: iGenVarPro} and using that $\by^*$ is a minimizer of $\varphi$ and, consequently,  $\nabla \varphi (\by^\ast)=0$, we can write
\begin{align}\label{eq:error_expression}
\be_{k+1} &= (\bar\bH^{(k)})^{-1}\Bigg( 
    \left[ -\nabla\varphi(\by^{(k)}) + \nabla \varphi (\by^\ast) + \nabla^2 \varphi(\by^\ast) \be_{k} \right] \nonumber\\
&\qquad + \left[ \nabla \varphi(\by^{(k)}) - (\bar\bJ^{(k)})^\top \bg^{(k)} - \nabla\mathcal{R}(\by^{(k)}) \right] + \left[\bar\bH^{(k)} - \nabla^2 \varphi(\by^\ast) \right] \be_{k} \Bigg) \nonumber\\
&=  (\bar\bH^{(k)})^{-1}\Bigg( \left[ -\nabla\varphi(\by^{(k)}) + \nabla \varphi (\by^\ast) + \nabla^2 \varphi(\by^\ast) \be_{k} \right] \nonumber\\
&\qquad + \left[ (\bJ^{(k)})^\top \bff^{(k)} - (\bar\bJ^{(k)})^\top \bg^{(k)} \right] + \left[(\bar\bJ^{(k)})^\top\bar\bJ^{(k)} - \nabla^2 \phi(\by^\ast) \right] \be_{k} \Bigg).
\end{align}

We will now bound the norm of each of the three terms in the above expression in terms of $\varepsilon^{(k)}$ and $\| \be_{k}\|$.
Regarding the first term, we have that by Taylor expansion and the smoothness of $\varphi$, there is a constant $T>0$ such that
\begin{equation}\label{eq:taylor}
\|-\nabla \varphi (\by^{(k)}) + \nabla \varphi (\by^\ast) + \nabla^2 \varphi(\by^\ast) \be_{k}\| \le T \| \be_{k}\|^2.
\end{equation}

In order to get an upper  bound for the second term in \eqref{eq:error_expression}, we write
\begin{align*}
\| (\bJ^{(k)})^\top \bff^{(k)} - (\bar\bJ^{(k)})^\top \bg^{(k)}\|\le\| (\bJ^{(k)})^\top  - (\bar\bJ^{(k)})^\top  \| \|\bff^{(k)}\|+  \|(\bar\bJ^{(k)})^\top \| \|\bff^{(k)}- \bg^{(k)}\|.
\end{align*}
Assume that $\varepsilon^{(k)} \kappa <\frac{1}{2}$. By \cite[Lemma 3.2]{espanol2025local}, 
\begin{align}\label{eq: approx_Jacobian}
\|\bJ^{(k)}  - \bar\bJ^{(k)}\| &<  8\sqrt{r(m+q)} M  \kappa_2^2(\bK(\by^{(k)})) \frac{\|\bb\|}{\|\bK(\by^{(k)})\|} \varepsilon^{(k)}
\le C  \varepsilon^{(k)},
\end{align}
where $C=8\sqrt{r(m+q)} M  \kappa^2 \gamma^{-1} \| \bb\|$. 
In addition, we have that 
$\|\bff^{(k)}\| \le \| \bb\|$, and Lemma \ref{lemma:approx_residual} implies:
 $$\| \bff^{(k)}- \bg^{(k)} \| <  4\kappa \|\bb\| \varepsilon^{(k)}. $$
Therefore, 
\begin{equation}\label{eq:gradient}
\|(\bJ^{(k)})^\top \bff^{(k)} - (\bar\bJ^{(k)})^\top \bg^{(k)}\|<  (C + 4\kappa \alpha )\| \bb\| \varepsilon^{(k)}.
\end{equation}

To get a bound for the last term in the expression \eqref{eq:error_expression} of $\be_{k+1}$, note that
$$\| (\bar\bJ^{(k)})^\top \bar\bJ^{(k)} - \nabla^2\phi (\by^\ast) \| \le \|(\bar\bJ^{(k)})^\top \bar\bJ^{(k)} - (\bJ^\ast)^\top \bJ^\ast\| +\|  (\bJ^\ast)^\top \bJ^\ast- \nabla^2 \phi (\by^\ast) \|.$$
Now, we have that
\begin{align*}
\|(\bar\bJ^{(k)})^\top \bar\bJ^{(k)} - (\bJ^\ast)^\top \bJ^\ast\|  &\le 
\|(\bar\bJ^{(k)})^\top - (\bJ^\ast)^\top \| \|\bar\bJ^{(k)} \|+ \|(\bJ^\ast)^\top \| \|\bar\bJ^{(k)} - \bJ^\ast \| \\ &\le 2\alpha \|\bar\bJ^{(k)} - \bJ^\ast \|
\end{align*}
and so,
\begin{equation}\label{eq:hessian}
\| ((\bar\bJ^{(k)})^\top \bar\bJ^{(k)} - \nabla^2 \phi (\by^\ast) ) \be_{k}\| \le (2\alpha \|\bar\bJ^{(k)} - \bJ^\ast \| +\delta^\ast) \|\be_{k}\|.
\end{equation}

Finally, we will obtain an upper bound for $\|(\bar\bH^{(k)})^{-1}\|$. To do so, we will need to show that
\[\|(\bH^\ast)^{-1} (\bar\bH^{(k)}-\bH^\ast)\|<1/2.\]

We have that
\begin{align*}
\|\bar\bH^{(k)}-\bH^\ast\|&=\|(\bar\bJ^{(k)})^\top \bar\bJ^{(k)} + \nabla^2 \mathcal{R}(\by^{(k)})- (\bJ^\ast)^\top \bJ^\ast - \nabla^2 \mathcal{R}(\by^*)\| \\
&\le \|(\bar\bJ^{(k)})^\top \bar\bJ^{(k)} - (\bJ^\ast)^\top \bJ^\ast \| + \| \nabla^2 \mathcal{R}(\by^{(k)}) - \nabla^2 \mathcal{R}(\by^*)\| \\
&\le 2\alpha  \|\bar\bJ^{(k)} - \bJ^\ast \| + L_\mathcal{R} \| \be_k\|. 
\end{align*}
By the bound in \eqref{eq: approx_Jacobian} and the Lipschitz assumption on $\bJ$, 
$$\|\bar\bJ^{(k)} - \bJ^\ast \| \le \|\bar\bJ^{(k)} - \bJ^{(k)}\|+\|\bJ^{(k)} - \bJ^\ast \| < C  \varepsilon^{(k)} + L \| \be_{k}\|.$$
Now, if $\|\be_{k}\| \le  \frac{1}{64\alpha \beta L}$, taking a tolerance $\varepsilon^{(k)}\le \frac{1}{64\alpha \beta C}$, we obtain
\begin{equation}\label{eq: diffJapJmin}
\|\bar\bJ^{(k)} - \bJ^\ast \|<\frac{1}{32\alpha \beta}.
\end{equation}
and so, if $\|\be_{k}\| \le  \frac{1}{4\beta L_{\mathcal{R}}}$, we deduce that
$\|\bar\bH^{(k)}-\bH^\ast\|<\frac{1}{2\beta}$
and, as a consequence,
\begin{align*}\|(\bH^\ast)^{-1} (\bar\bH^{(k)}-\bH^\ast)\|\le \|(\bH^\ast)^{-1} \| \| \bar\bH^{(k)}-\bH^\ast\|  < \frac{1}{2}.\end{align*}
Therefore, by \cite[Theorem 3.1.4]{dennisschnabel}, 
\begin{align*}
\|(\bar\bH^{(k)})^{-1}\|& \le  \frac{\|(\bH^\ast)^{-1}\|}{1-\|(\bH^\ast)^{-1} (\bar\bH^{(k)} - \bH^\ast)\|} 
< 2 \beta.
\end{align*}

From this inequality together with \eqref{eq:taylor}, \eqref{eq:gradient},  \eqref{eq:hessian}, and \eqref{eq: diffJapJmin},  it follows that
\begin{align*}
\| \be_{k+1}\|  & \le \|(\bar\bH^{(k)})^{-1}\| \Big(T \|\be_{k}\|^2 +  (C  + 4\kappa \alpha )\| \bb\| \varepsilon^{(k)}  + (2\alpha \|\bar\bJ^{(k)} - \bJ^\ast \| +\delta^\ast) \|\be_{k}\|\Big)\\
& \le (2 \beta T \| \be_k\| + 4\alpha \beta \|\bar\bJ^{(k)} - \bJ^\ast \| + 2\beta \delta^*)\| \be_k\|+ 2\beta (C + 4\kappa \alpha )\| \bb\| \varepsilon^{(k)} \\
& < \Big(2\beta T \|\be_{k}\|+ \frac{1}{8}+2\beta\delta^\ast\Big)\|\be_{k} \|+ 2\beta (C + 4\kappa \alpha )\| \bb\| \varepsilon^{(k)} .
\end{align*}

Set $\zeta:=\min\{ \frac{1}{64\alpha\beta L}, \frac{1}{4\beta L_{\mathcal{R}}}\frac{1}{32 \beta T}\} $. Assume, by induction, that
$$\|\be_{k}\| \le \min \left\{\zeta, \frac{1}{2^{k}}\right\}.$$
Then, the previously required conditions on $\|\be_k\|$ hold and so, as $\beta\delta^* <\frac{1}{32}$, 
we conclude that  
$$\| \be_{k+1}\| < \frac{1}{4} \|\be_{k} \| + 2\beta (C  + 4\kappa \alpha )\| \bb\| \varepsilon^{(k)}. $$
Now, if $\varepsilon^{(k)}\le \frac{1}{4\beta (C + 4\kappa \alpha )\| \bb\|} \min \left\{ \zeta, \frac{1}{2^{k+1}}\right\}$, it follows that 
\begin{align*}
\| \be_{k+1}\| &\le \frac{1}{4}  \min \left\{\zeta, \frac{1}{2^k}\right\} +  \frac{1}{2}   \min \left\{ \zeta, \frac{1}{2^{k+1}}\right\}  \le \min \left\{ \zeta, \frac{1}{2^{k+1}}\right\}.
\end{align*}
This finishes the proof. \end{proof}

\begin{remark}
Algorithms~\ref{Alg:RGenVarPro} and~\ref{Alg:I-GenVarPro} are analyzed under local convergence assumptions and are therefore presented using full quasi-Newton steps, i.e., $\by^{(k+1)} = \by^{(k)} + \bs^{(k)}$. In the numerical experiments of Section~\ref{section:NumericalExamples}, we implemented these methods without an explicit globalization strategy such as a line search or trust-region mechanism. This proved sufficient for the semi-blind deblurring problems considered, as the reduced problem is low-dimensional and exhibits mild nonlinearity, and no divergence or instability was observed even from relatively distant initial guesses. While globalization techniques could be incorporated to enhance robustness in more challenging settings, the present implementation already demonstrates stable convergence for the examples studied here.
\end{remark}

\section{Special Regularization Cases}\label{section:special_cases}
Now, we demonstrate the applicability of \linebreak \texttt{RGenVarPro} to two specific choices of the regularization term $\mathcal{R}(\by)$, which will be used in the numerical results section. 

\subsection{Regularization via 2-norm}\label{subsec:2-norm}

We first consider a quadratic regularization on $\by$ of the form
\begin{equation*}
\mathcal{R}(\by) = \frac{\mu^2}{2} \| \by - \by_0 \|^2,
\end{equation*}
whose gradient and Hessian are given by
\begin{equation*}
\nabla \mathcal{R}(\by) = \mu^2 (\by - \by_0),
\qquad
\nabla^2 \mathcal{R}(\by) = \mu^2 \mathbf{I}_r.
\end{equation*}

With this choice of regularization, the corresponding RSNLS problem takes the form
\begin{equation*}
\min_{\bx, \by} 
\frac{1}{2}\| \bA(\by) \bx - \bb \|^2
+ \frac{\lambda^2}{2} \| \bL \bx \|^2
+ \frac{\mu^2}{2} \| \by - \by_0 \|^2,
\end{equation*}
which yields the reduced optimization problem
\begin{align*}
\min_{\by}
\frac{1}{2}\| \bK(\by)\bK(\by)^\dagger \bd - \bd \|^2
+ \frac{\mu^2}{2} \| \by - \by_0 \|^2
&=
\min_{\by}\frac{1}{2} \left\| \begin{bmatrix} - \mathcal{P}_{\bK(\by)}^\perp \bd \\ \mu (\by - \by_0) \end{bmatrix} \right\|^2,
\end{align*}
where $\bK(\by)$ is defined as in~\eqref{eq: defKd}.

Define the nonlinear function
\begin{equation*}
\bF(\by) = 
\begin{bmatrix}
- \mathcal{P}_{\bK(\by)}^\perp \bd \\
\mu (\by - \by_0)
\end{bmatrix}.
\end{equation*}
We are now interested in solving the reduced nonlinear least squares problem
\begin{equation*}
\min_{\by} \frac{1}{2}\| \bF(\by) \|^2.
\end{equation*}
To solve this problem, we apply the Gauss–Newton method, with iterates defined by
\begin{equation*}
\by^{(k+1)} = \by^{(k)} + \bs^{(k)}, \quad k = 0, 1, 2, \dots,
\end{equation*}
where $\bs^{(k)}$ is the Gauss–Newton step obtained by solving
\begin{equation*}
\bs^{(k)} = \arg\min_{\bs} \left\| \bJ_{\bF}(\by^{(k)}) \bs + \bF(\by^{(k)}) \right\|^2,
\end{equation*}
and $\bJ_{\bF}(\by) \in \mathbb{R}^{(m+q+r)\times r}$ denotes the Jacobian of $\bF$.

Notice that the Jacobian matrix $\bJ_{\bF}(\by)$ has the block structure
\begin{equation*}
\bJ_{\bF}(\by) = 
\begin{bmatrix}
\bJ_{\bff}(\by) \\
\mu \mathbf{I}_r
\end{bmatrix},
\end{equation*}
where $\bff(\by)= - \mathcal{P}_{\bK(\by)}^\perp \bd$.

\subsection{Regularization via Logarithms}\label{subsec:log}
The other regularization for $\by$ that we consider is inspired by the log-barrier optimization method~\cite{nocedal2006numerical}, and therefore, we propose solving the RSNLS problem
\begin{equation*}
    \min_{\bx, \by} \frac{1}{2}\| \bA(\by) \bx - \bb \|^2 + \frac{\lambda^2}{2} \| \bL \bx\|^2 - \sum_{1\le j \le r}\mu_j^2 \log(y_j)\end{equation*} 
with positive constants $\mu_j$, $j=1,\dots, r$.

The corresponding reduced functional has the form
\[ \varphi(\by) = \dfrac{1}{2}\| - \calP_{\bK}^\perp(\by) \bd \|^2 - \sum_{1\le j \le r}\mu_j^2 \log(y_j). \]
To solve $\min_{\by} \varphi(\by)$, we may apply a quasi-Newton's method using
\[ \nabla \varphi(\by) = \bJ^\top_{\mathbf{f}}(\by) \mathbf{f}(\by)-\left[\begin{array}{cccc} \mu_1^2/y_1 & \dots & \mu_r^2/y_r   \end{array} \right]^\top.\]
and the approximated Hessian given by
\[ \bH(\by) = \bJ^\top_{\mathbf{f}}(\by) \bJ_{\mathbf{f}}(\by) + \bD^\top(\by) \bD(\by), \]
where $\bD(\by)$ is the diagonal matrix with  $[\bD(\by)]_{jj} = -\mu_j / y_j$, $j=1,\dots, r$.
Thus, the iterations are defined by 
$$
\by^{(k+1)} = \by^{(k)} + \bs^{(k)}, \,k = 0,1,2,...,
$$
where
\begin{equation*}
\bs^{(k)} = \arg\min_{\bs} \left\| \left[\begin{array}{c}\bJ_{\bff}(\by^{(k)}) \\ \bD(\by^{(k)})\end{array}\right]\bs + \left[\begin{array}{c} \bff(\by^{(k)}) \\ \mathbf{u}\end{array} \right] \right\|^2
\end{equation*}
with $\mathbf{u}= [\mu_1, \dots, \mu_r]^\top$.

\section{Numerical Results}\label{section:NumericalExamples}

In this section, we validate the proposed algorithms on a semi-blind image deblurring problem. The goal is to recover both the sharp image and the blur parameters governing the point spread function (PSF). We test the accuracy, convergence, and computational performance of \texttt{RGenVarPro} and \texttt{iRGenVarPro}.  Our implementations of the methods described in this paper will be available at \url{https://github.com/malenaespanol/RGenVarPro}.

\subsection{Experimental Setup}

The blurring matrix $\bA(\by)$ is ill-conditioned and depends on a parameterized PSF $\bP(\by)\in\mathbb{R}^{n\times n}$. 
We consider an \emph{isotropic} Gaussian PSF depending on a single scalar parameter $\by>0$, defined by
\begin{equation*}
[\bP(\by)]_{i,j} = c(\by)\exp\!\left(-\frac{(i-k)^2 + (j-\ell)^2}{2\by^2}\right),
\end{equation*}
where the normalization constant $c(\by)$ (which depends on $\by$) satisfies $\sum_{i,j} [\bP(\by)]_{i,j} = 1$. 
Then, the entries of $\bA(\by)$ are defined by $\bP(\by)$. This isotropic formulation is sufficient to illustrate the main phenomena of interest, namely, the occurrence of degenerate no-blur solutions in the absence of regularization on the nonlinear parameters and the stabilizing effect of the proposed regularization strategies, while keeping the reduced optimization problem one-dimensional and allowing for clear visualization of the objective function $\varphi(\by)$.

For the regularization operator $\bL$, we employ the standard five-point Laplacian stencil
\begin{equation*}
\begin{bmatrix} 
0 & 1 & 0 \\ 
1 & -4 & 1\\ 
0 & 1 & 0
\end{bmatrix},
\end{equation*}
to construct the discrete Laplacian matrix $\bL$. Assuming periodic boundary conditions and spatially invariant blur, both $\bA$ and $\bL$ are jointly diagonalizable via the discrete Fourier transform. Efficient implementations for computing residuals and Jacobians follow \cite{Espanol_2023}.

\begin{figure}[ht]
	\centering
	\begin{minipage}{0.32\textwidth}
		\centering
	  \includegraphics[width = \textwidth]{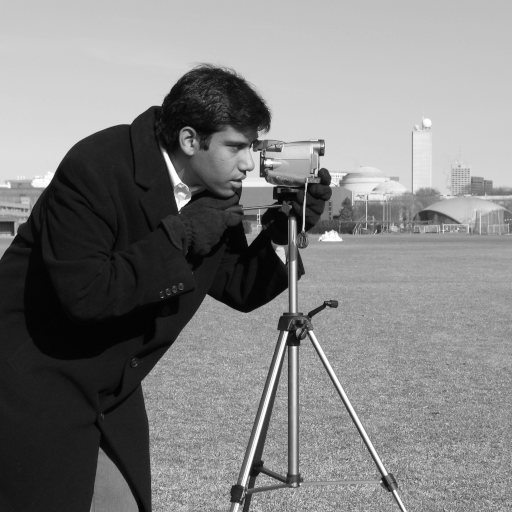}\\(a)
	\end{minipage}
	\begin{minipage}{0.32\textwidth}
		\centering
	  \includegraphics[width = \textwidth, height = \textwidth]{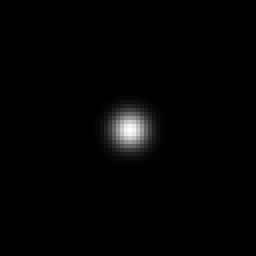}\\(b)
	\end{minipage}
	\begin{minipage}{0.32\textwidth}
		\centering
	  \includegraphics[width = \textwidth, height = \textwidth]{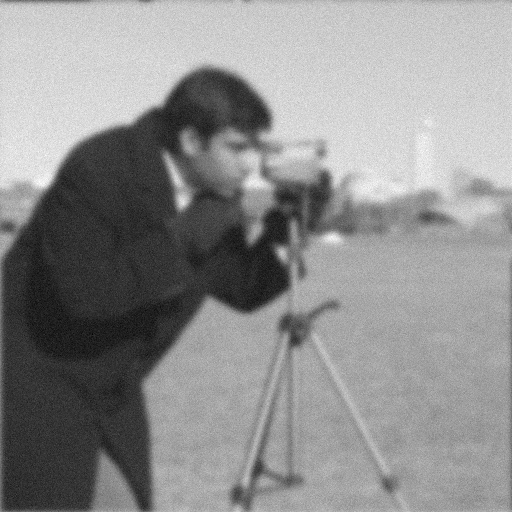}\\(c)
	\end{minipage}
	\caption{Illustrative \textit{cameraman} test problem: (a) true image, (b) PSF (cropped and upsampled by a factor of $4$ via nearest-neighbor interpolation), and (c) blurred and noisy image with $5\%$ Gaussian white noise.}
	\label{fig:Setup}
\end{figure}

We use the $512\times 512$ \textit{cameraman} image (Figure~\ref{fig:Setup}(a)) as the ground truth. The image is blurred using a Gaussian PSF with parameter $\by_{\mathrm{true}}=3$. Gaussian white noise with a standard deviation corresponding to $5\%$ of the image norm is then added, producing the observation in Figure~\ref{fig:Setup}(c).

\subsection{Testing \texttt{RGenVarPro}}
We solve the deblurring problem using \texttt{RGenVarPro} (Algorithm~\ref{Alg:RGenVarPro}) with initial guess $\by^{(0)}=5$. All computations were implemented in Python and executed on Google Colab. To assess the reconstruction quality at iteration $k$, we compute the Relative Reconstruction Error (RRE) with respect to both $\bx$ and $\by$, which are defined as follows
\begin{equation*}
{\rm RRE}(\bx^{(k)}) = \frac{\|\bx^{(k)} - \bx_{\rm true}\|}{\|\bx_{\rm true}\|} \mbox{ and } {\rm RRE}(\by^{(k)}) = \frac{|\by^{(k)} - \by_{\rm true}|}{|\by_{\rm true}|}.
\end{equation*}

We use parameter values $\lambda = 1.5$ and $\mu = 3.8$ when $\mathcal{R}(\by)$ is the 2-norm regularizer (see Subsection~\ref{subsec:2-norm}), and $\lambda = 0.425$ and $\mu = 3.8$ when $\mathcal{R}(\by)$ follows the logarithmic regularizer (see Subsection~\ref{subsec:log}). The regularization parameters $\lambda$ and $\mu$ were chosen empirically based on preliminary tests to illustrate the effect of regularization on both the linear and nonlinear variables. The parameter $\lambda$ controls the amount of smoothing in the reconstructed image, while $\mu$ governs the strength of the penalty on the blur parameter and prevents convergence to the trivial no-blur solution. Although no systematic parameter selection strategy (such as the L-curve or cross-validation) was employed, we observed that the proposed methods exhibit stable convergence and similar qualitative behavior over a range of moderate values of $\lambda$ and $\mu$. A detailed study of automated parameter selection is beyond the scope of this work. Figure~\ref{fig:model} compares the objective function $\varphi(\by)$ for the unregularized and regularized cases. As seen, the unregularized model attains its minimum at $\by=0$, while the regularized models achieve minima near $\by=3$, which matches the true blur parameter.

\begin{figure}[ht]
	\centering
	\begin{minipage}{0.32\textwidth}
	  \includegraphics[width = \textwidth]{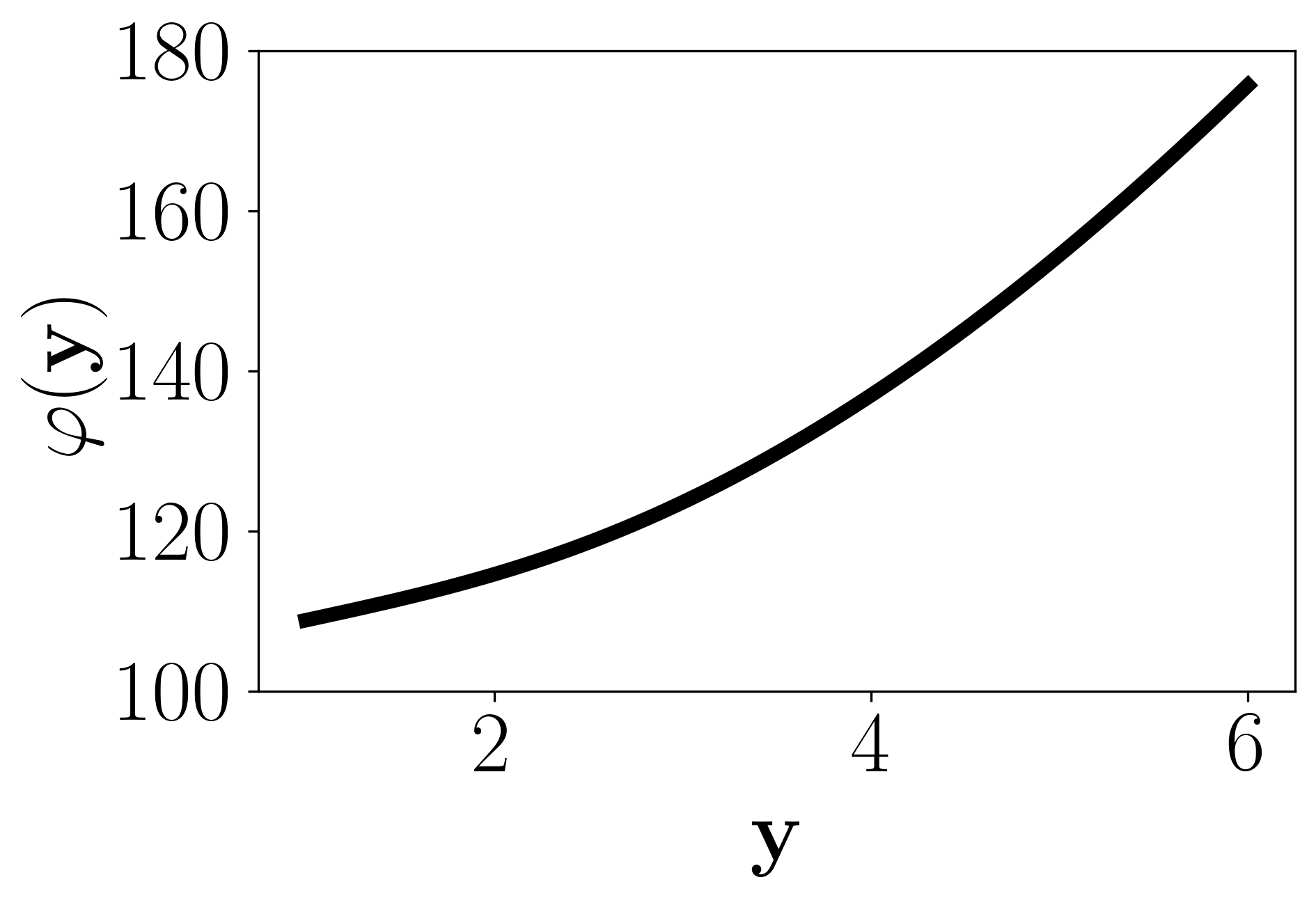}
	\end{minipage}
	\begin{minipage}{0.32\textwidth}
	  \includegraphics[width = \textwidth]{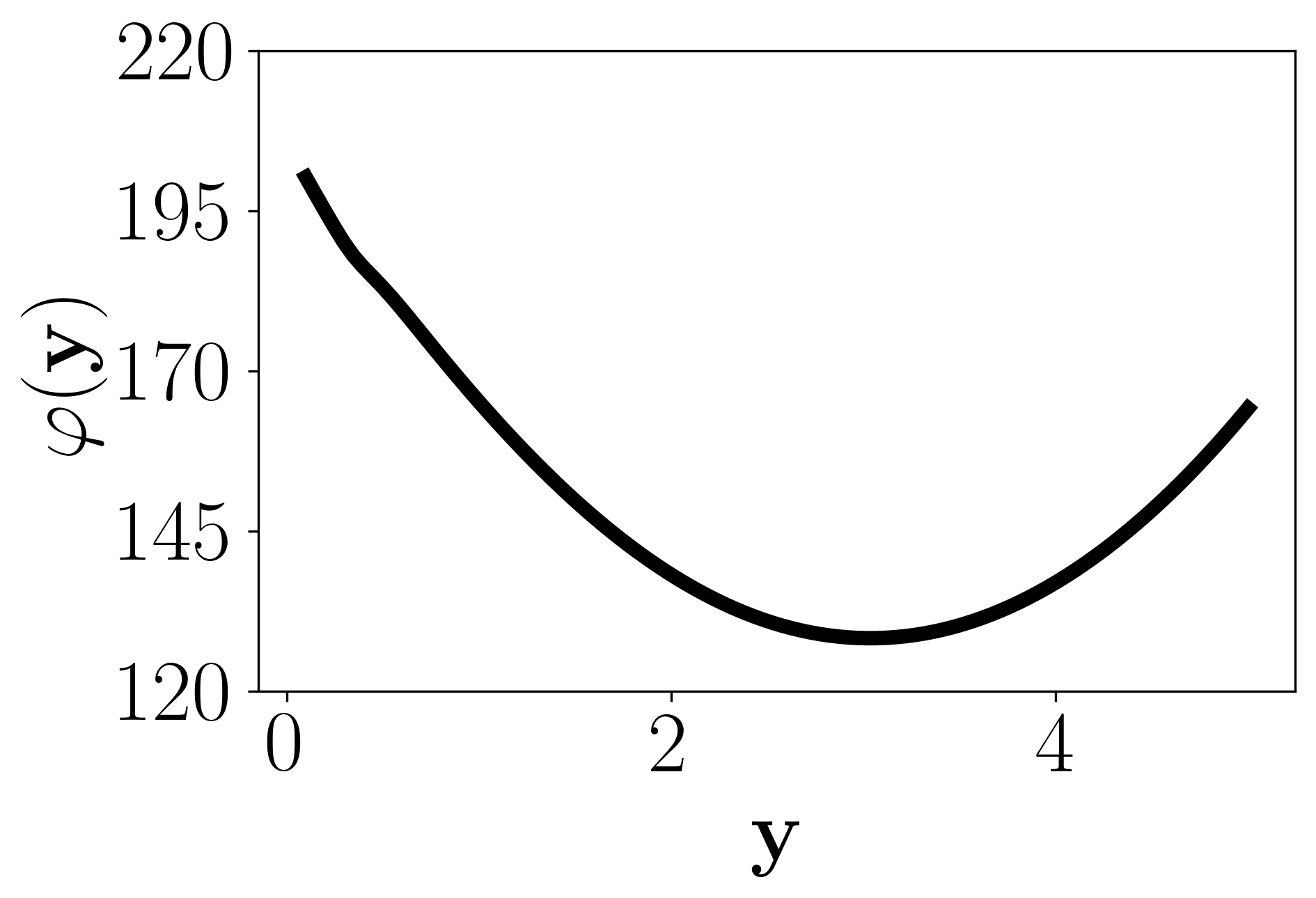}
	\end{minipage}
	\begin{minipage}{0.32\textwidth}
	  \includegraphics[width = \textwidth]{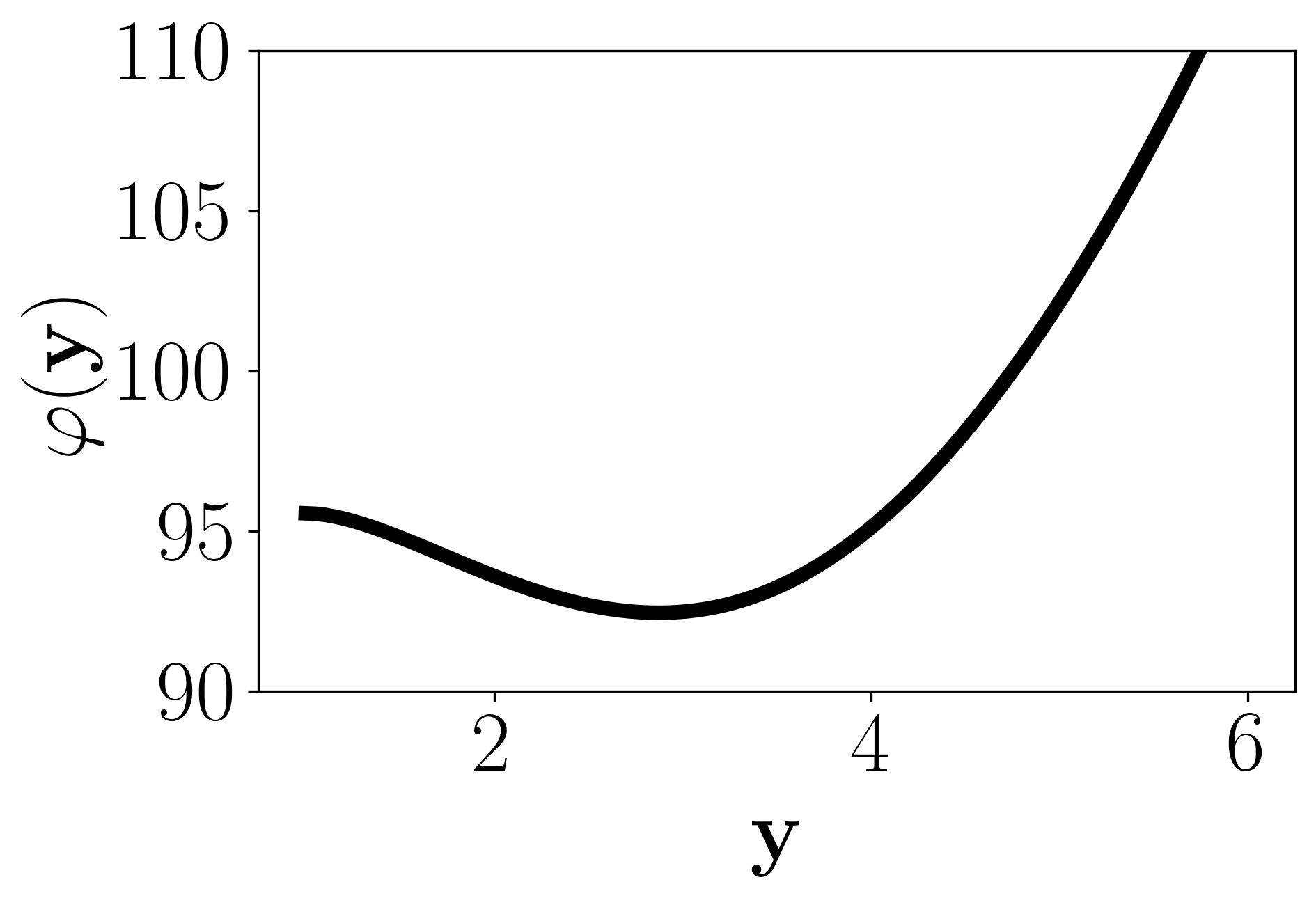}
	\end{minipage}
	\caption{Plots of $\varphi(\by)$ without regularization (left), with $\mathcal{R}(\by)= \mu^2\|\by - \by_0\|^2$ (middle), and with $\mathcal{R}(\by)=-\sum_{j}\mu_j^2\log(y_j)$ (right). Regularization shifts the minimum from $\by=0$ toward the true value $\by\approx3$.}
	\label{fig:model}
\end{figure}

Figures~\ref{fig:convergence_norm} and~\ref{fig:convergence_log} illustrate the convergence behavior in terms of the relative reconstruction error and the evolution of the objective function $\mathcal{F}(\bx^{(k)}, \by^{(k)})$ for the 2-norm and logarithmic regularizers, respectively. Both regularizations recover a visually accurate image as can be seen in Figure~\ref{fig:reconstructions} and achieve stable convergence.

\begin{figure}[ht]
	\centering
	\begin{minipage}{0.32\textwidth}
	  \includegraphics[width = \textwidth]{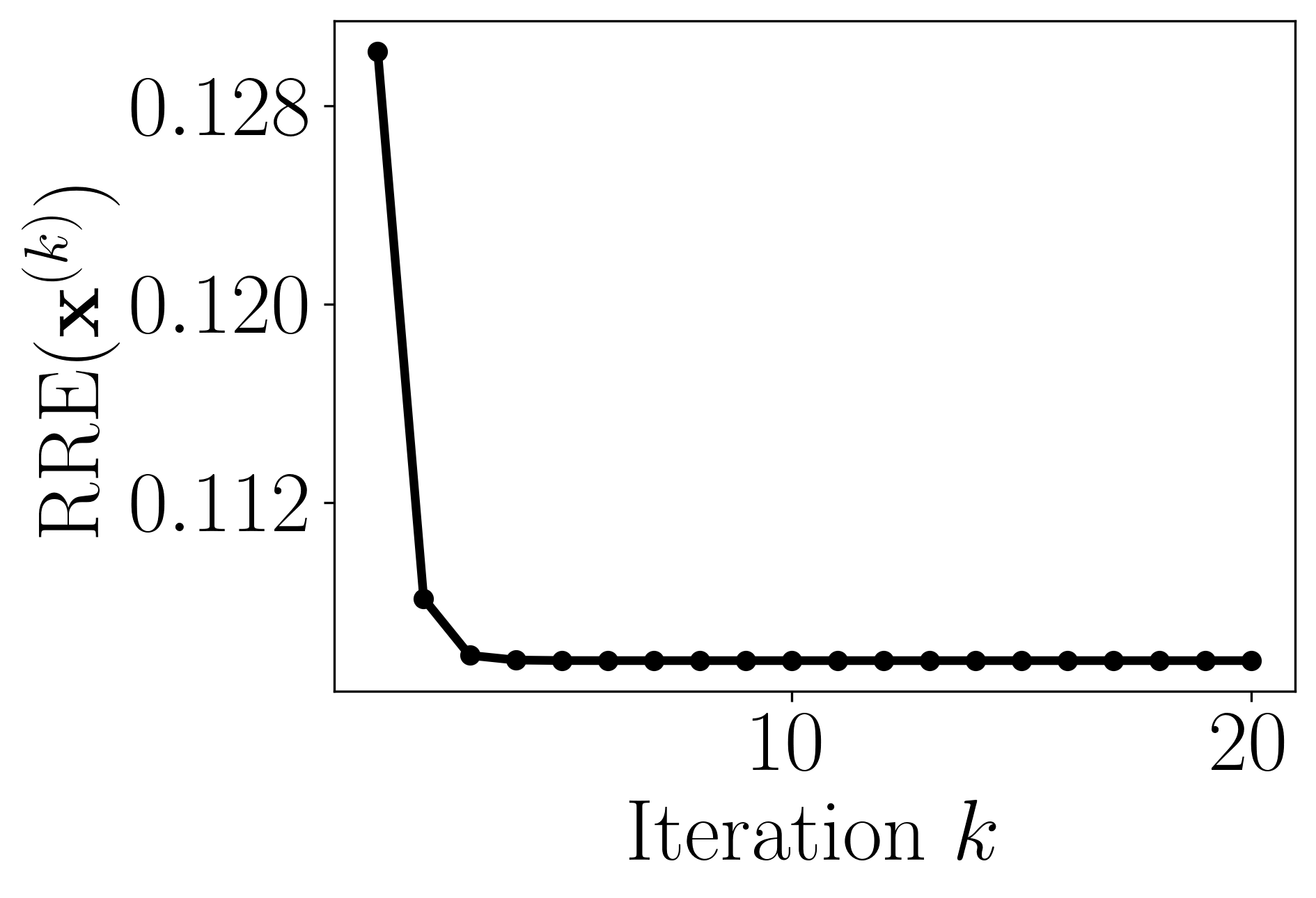}
	\end{minipage}
	\begin{minipage}{0.32\textwidth}
	  \includegraphics[width = \textwidth]{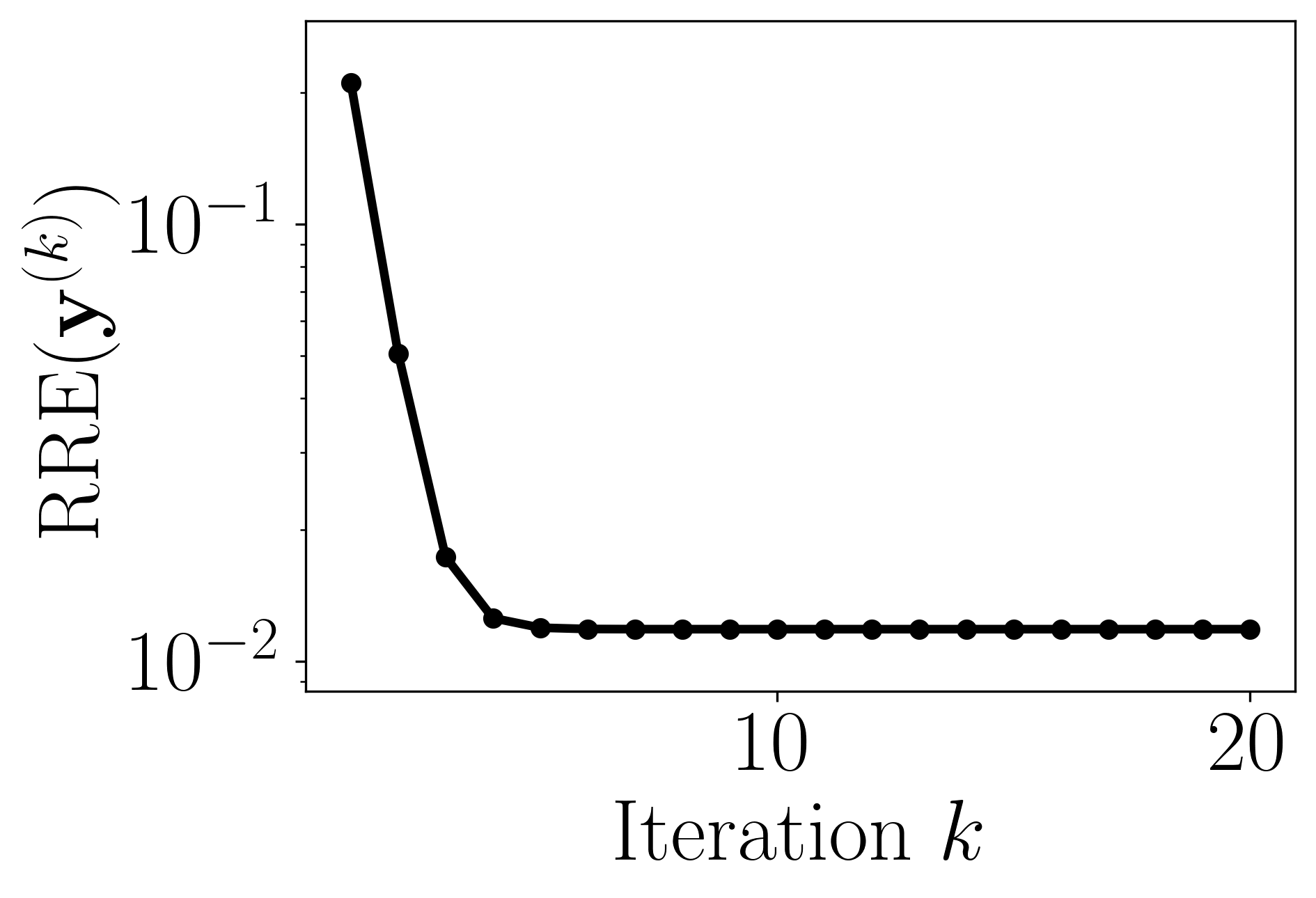}
	\end{minipage}
    \begin{minipage}{0.32\textwidth}
	  \includegraphics[width = \textwidth]{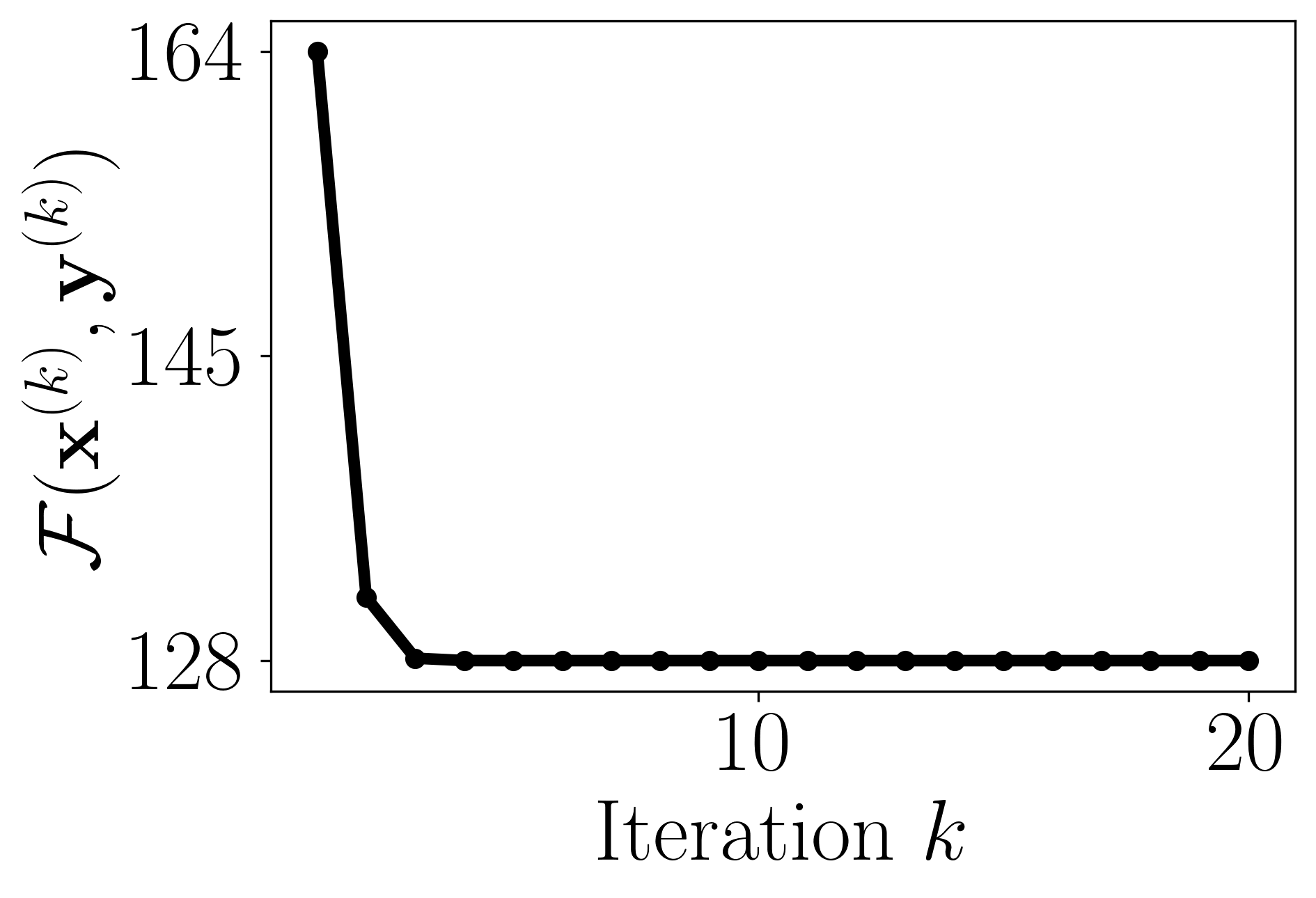}
	\end{minipage}
	\caption{Results for the 2-norm regularizer: (left) reconstructed image (SSIM = 0.66), (middle) RRE for $\bx$, and (right) objective function value vs.\ iteration number.}
	\label{fig:convergence_norm}
\end{figure}

\begin{figure}[ht]
	\centering
	\begin{minipage}{0.32\textwidth}
	  \includegraphics[width = \textwidth]{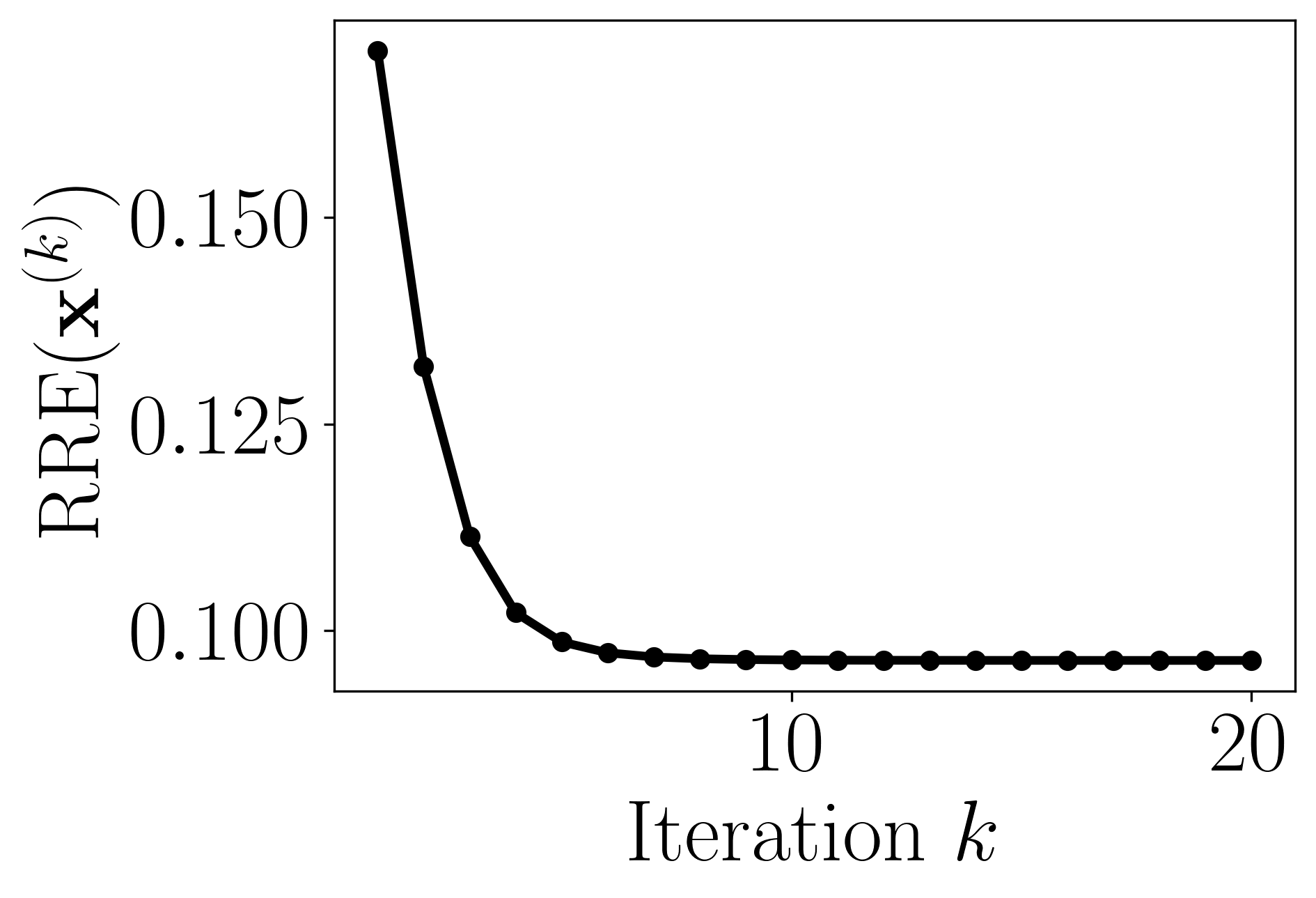}
	\end{minipage}
	\begin{minipage}{0.32\textwidth}
	  \includegraphics[width = \textwidth]{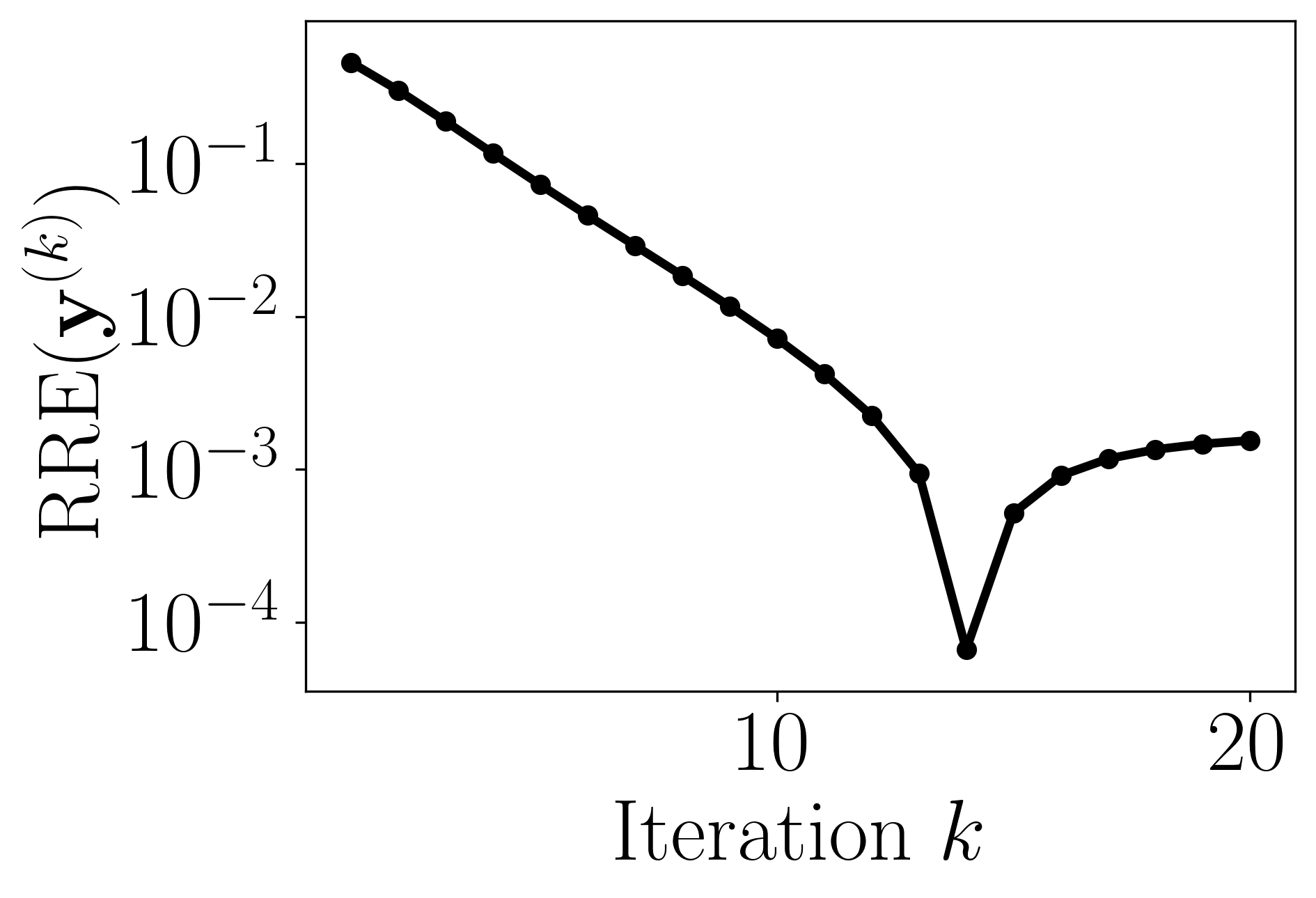}
	\end{minipage}
    \begin{minipage}{0.32\textwidth}
	  \includegraphics[width = \textwidth]{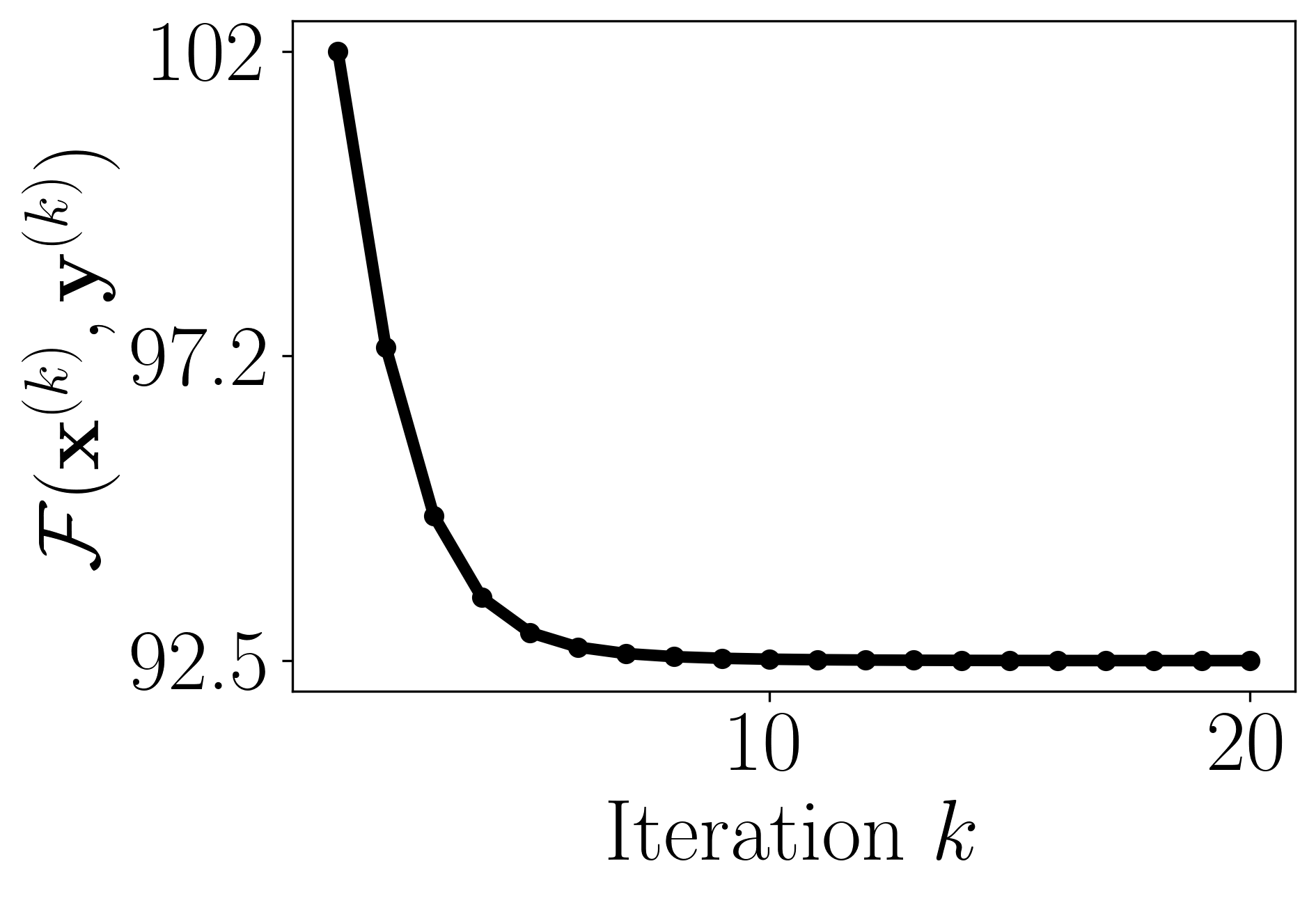}
	\end{minipage}
	\caption{Results for the logarithmic regularizer: (left) reconstructed image (SSIM = 0.63), (middle) RRE for $\bx$, and (right) objective function value vs.\ iteration number.}
	\label{fig:convergence_log}
\end{figure}

\subsection{Testing \texttt{iRGenVarPro}}

Next, we solve the semi-blind deblurring problem using the inexact variant \texttt{iRGenVarPro} (Algorithm~\ref{Alg:I-GenVarPro}) with initial guess $\by^{(0)}=5$. The inner linear system is solved \emph{approximately} using the Python implementation of the LSQR algorithm, available as \texttt{scipy.sparse.linalg.lsqr}, with a maximum of 300 iterations. We compare four tolerance strategies for the LSQR subproblem:
\begin{itemize}
\item \textbf{LSQR-s:} fixed small tolerance $\varepsilon^{(k)}=10^{-9}$;
\item \textbf{LSQR-ab:} exponentially decreasing tolerance $\varepsilon^{(k)}=\varepsilon^{(k-1)}/2$;
\item \textbf{LSQR-lb:} linearly decreasing tolerance $\varepsilon^{(k)}=\varepsilon^{(0)}/k$;
\item \textbf{LSQR-b:} fixed large tolerance $\varepsilon^{(k)}=\varepsilon^{(0)}$.
\end{itemize}
We use $\varepsilon^{(0)}=10^{-3}$ for LSQR-ab, LSQR-lb, and LSQR-b. Note that LSQR-ab corresponds to the tolerance strategy used in Algorithm~\ref{Alg:I-GenVarPro}, for which the theoretical convergence results in Theorem~\ref{thm: main} apply.

\begin{figure}[ht]
    \centering
    \includegraphics[width=.47\linewidth]{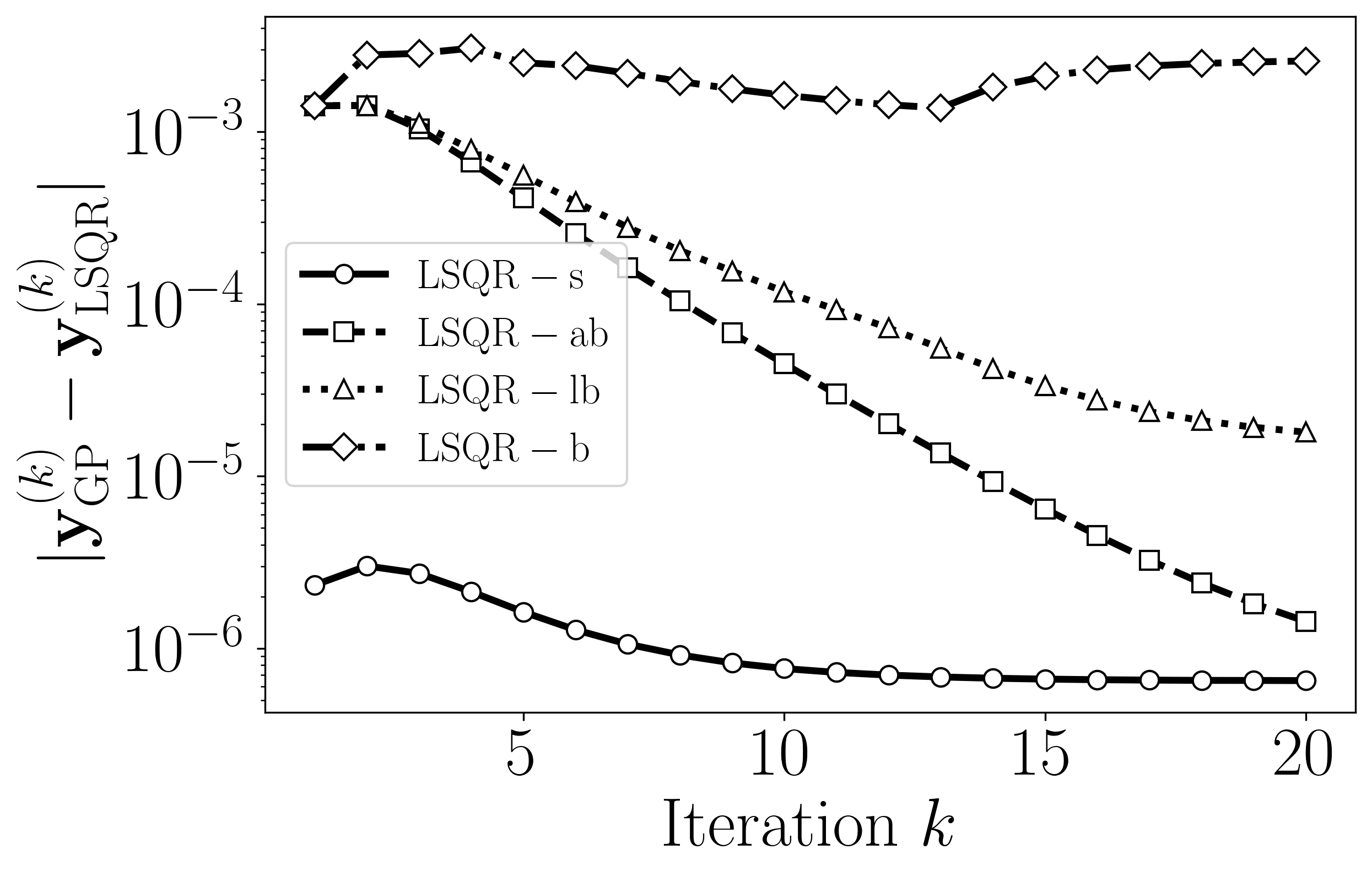}
    \includegraphics[width=.47\linewidth]{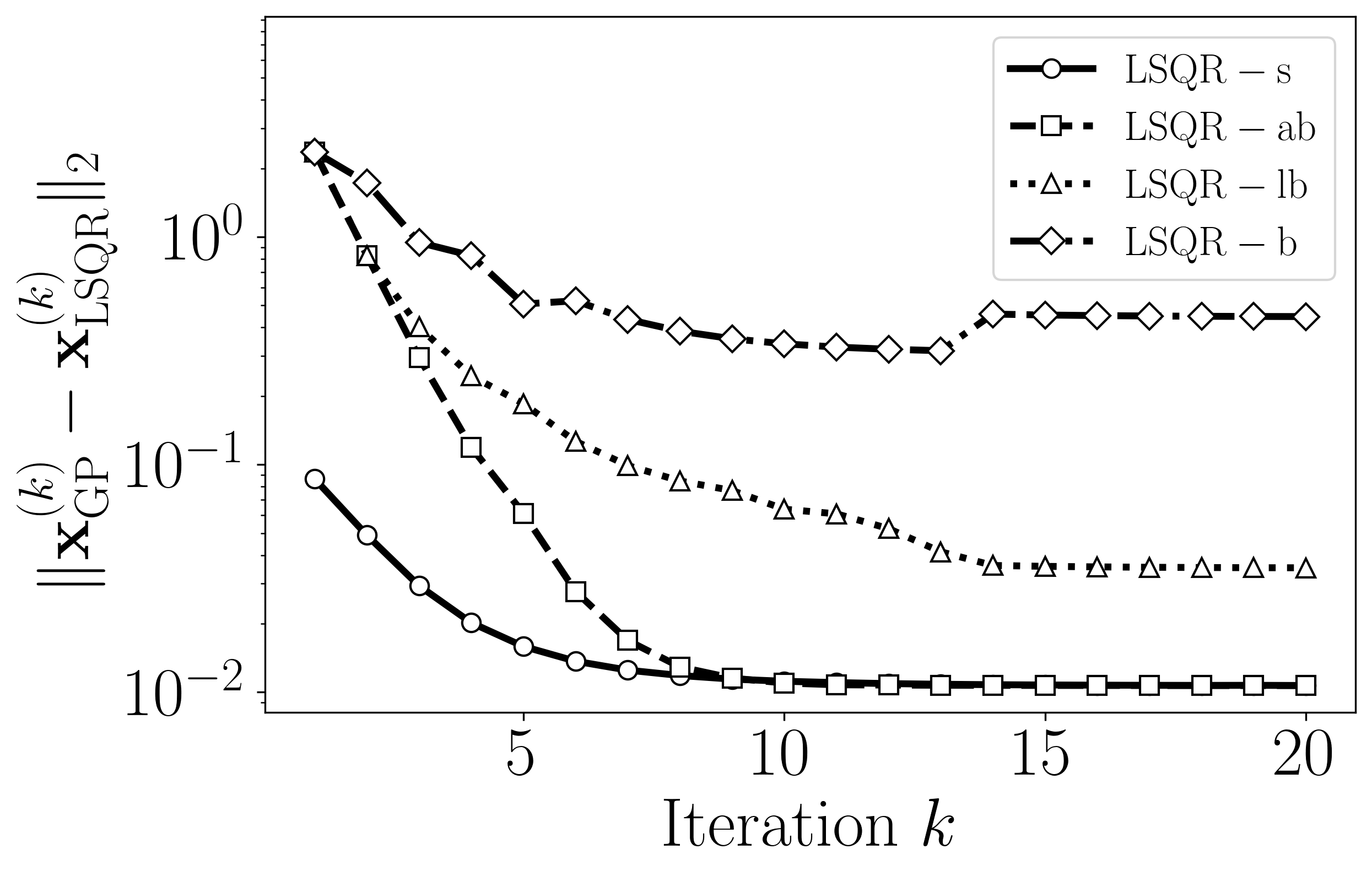}\\[0.5ex]
    \includegraphics[width=.47\linewidth]{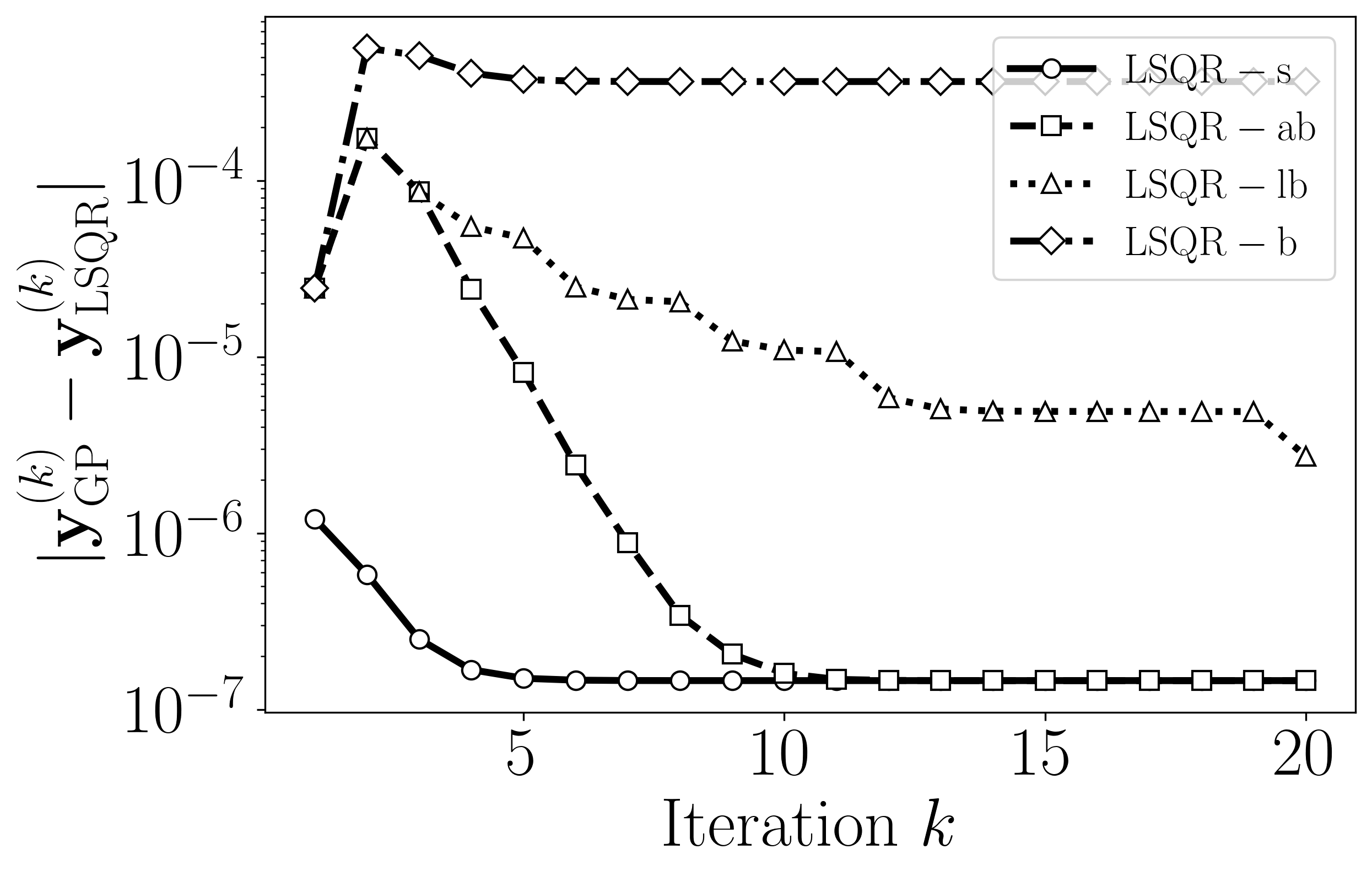}
    \includegraphics[width=.47\linewidth]{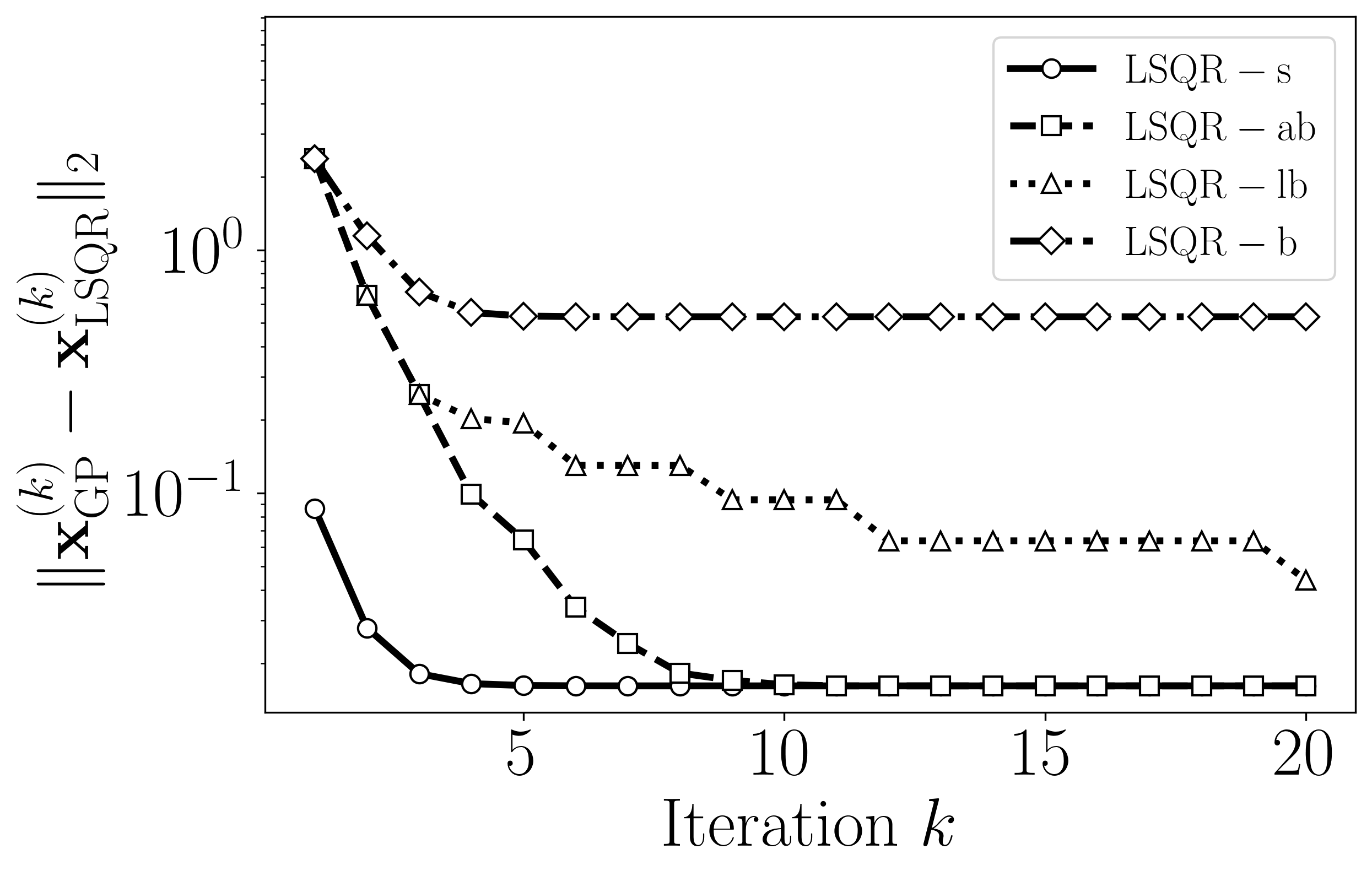}
    \caption{Comparison of \texttt{iRGenVarPro} results for different LSQR tolerance strategies. Errors with respect to the solution obtained using \texttt{RGenVarPro} in $\by$ and $\bx$ at iteration $k$ for the 2-norm regularizer (top) and the logarithmic regularizer (bottom).}
    \label{fig:tol_schedules}
\end{figure}

Figure~\ref{fig:tol_schedules} shows the evolution of the absolute errors in $\by$ and $\bx$ relative to the solutions obtained with \texttt{RGenVarPro}~(GP) across iterations for different tolerance strategies, while Table~\ref{tab:time} reports the corresponding computational times for 30 iterations. As expected, looser tolerances (LSQR-b and LSQR-lb) substantially reduce computational cost at the expense of reconstruction accuracy. Conversely, smaller tolerance values (LSQR-ab and LSQR-lb) yield more accurate results, with the adaptive exponential strategy (LSQR-ab) offering the best trade-off between speed and precision, as it can be terminated earlier without significant loss of accuracy.

\begin{figure}[ht]
    \centering
    \includegraphics[width=1\linewidth]{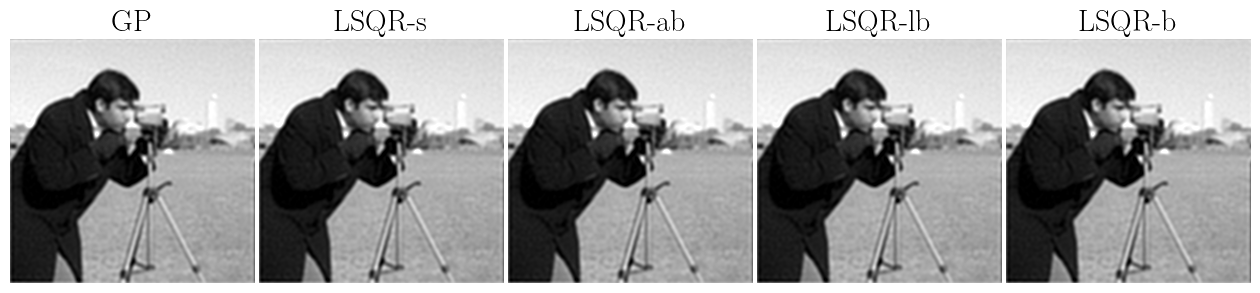}\\[0.5ex]
    \includegraphics[width=1\linewidth]{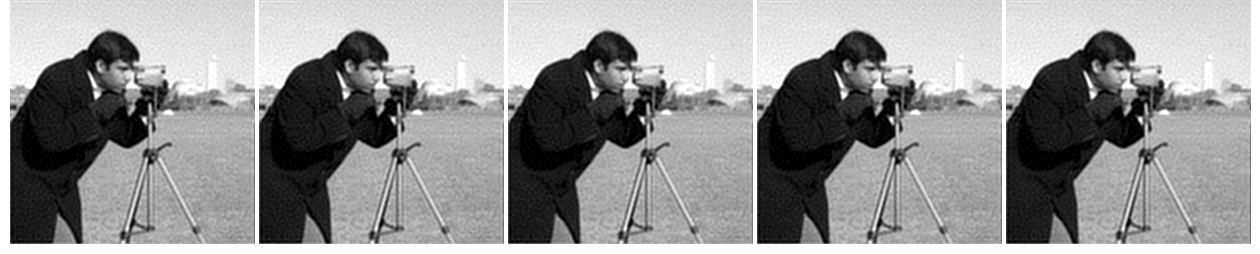}
    \caption{Reconstructed images obtained using \texttt{RGenVarPro} \emph{(GP)} and \texttt{iRGenVarPro}  under different LSQR tolerance strategies \emph{(LSQR-s}, \emph{LSQR-ab}, \emph{LSQR-lb}, and \emph{LSQR-b)} for the 2-norm (top) and logarithmic (bottom) regularizers.}
    \label{fig:reconstructions}
\end{figure}

\begin{table}[!ht]
    \centering
    \caption{CPU time (in seconds) required for seven iterations of \texttt{iRGenVarPro} under different LSQR tolerance strategies.}
    \label{tab:time}
    \begin{tabular}{l|cccc}
        & LSQR-s & LSQR-ab & LSQR-lb & LSQR-b \\ \hline
        2-norm model & 114 & 90 & 55 & 38 \\
        Log model    & 110 & 80 & 53 & 34 \\
    \end{tabular}
\end{table}

The numerical experiments demonstrate the effectiveness of the proposed variable projection methods for semi-blind image deblurring. The regularized formulations successfully recover both the image and blur parameters, while the unregularized model fails to identify the correct blur width. The \texttt{RGenVarPro} algorithm achieves stable convergence and high-quality reconstructions, whereas its inexact counterpart, \texttt{iRGenVarPro}, provides a flexible balance between computational cost and accuracy. Among the tested tolerance strategies, the exponentially decreasing scheme (\texttt{LSQR-ab}) achieves performance comparable to that of the exact solver at a significantly reduced cost. Overall, these experiments demonstrate that the proposed algorithms are both accurate and efficient, providing strong empirical support for the theoretical results. 

\section{Conclusions}\label{section: conclusions}
We presented an extension of the variable projection (VarPro) method for solving separable nonlinear least squares problems with regularization on both the nonlinear and linear parameters. More specifically, our approach solves problems with general-form Tikhonov regularization for the linear variables and a differentiable regularization term for the nonlinear parameters, thereby improving stability and reconstruction quality in inverse problems such as semi-blind image deblurring. 
To efficiently solve the reduced problem introduced through the VarPro method, we developed a quasi-Newton method. We established a local convergence analysis under standard smoothness assumptions, demonstrating conditions for superlinear or quadratic convergence. For large-scale problems, we proposed using iterative solvers, such as LSQR, to compute approximate solutions to the inner linear subproblems and to approximate the Hessian and residuals. We proved local convergence of the quasi-Newton method even under these approximations. 

Our numerical experiments on semi-blind image deblurring confirmed the theoretical results, showing that the proposed method effectively recovers both the image and blur parameters. These results also indicate the robustness of the method across different approximation strategies and its potential for large-scale practical applications.  

Future work includes extending this framework to dynamic inverse problems and exploring its integration within Bayesian inverse problem formulations and learning-based approaches, where parameter estimation and uncertainty quantification can be addressed simultaneously.

\appendix
\section{Generalized Motivating Example}\label{appendix}
Here, we extend the analysis done for the $2\times 2$ motivating example in Section \ref{sec: motivating example}. We now assume that the matrix \(\bA(\sigma)\in \mathbb{R}^{n\times n}\) is defined by some kernel \(g_\sigma\) that depends on \(\sigma\in \mathbb{R}\) and  is diagonalizable by the unitary Discrete Fourier Transform (DFT) matrix  \(\bF\) (so \(\bF^\ast\bF=\bI\) and the first column of \(\bF\) is \(f_0=\tfrac{1}{\sqrt n}\mathbf 1\)), i.e.,
\[
\bA(\sigma)=\bF^\ast \Lambda(\sigma)\bF,
\]
with \(\Lambda(\sigma)=\operatorname{diag}\!\big(\mu_0(\sigma),\dots,\mu_{n-1}(\sigma)\big)\).

We write the first column of \(\bA(\sigma)\) as
\[
[\bA(\sigma)]_{j+1,1} = \frac{a_j(\sigma)}{G_0(\sigma)}, \qquad j=0,\dots,n-1,
\]
where \(a_j(\sigma)\) defines the \((j+1)\)-th unnormalized entry as a function of the kernel \(g_\sigma\), and
\[
G_0(\sigma) := \sum_{j=0}^{n-1} a_j(\sigma)
\]
is the normalizing factor (so that the first column sums to \(1\)). Note that \(a_0(\sigma)=1\) for kernels normalized at zero shift. To avoid normalization ambiguity we define the unnormalized discrete Fourier sums
\begin{equation*}
G_k(\sigma):=\sum_{j=0}^{n-1} a_j(\sigma)\,e^{-i\frac{2\pi k}{n}j},\qquad k=0,\dots,n-1,
\end{equation*}
and then, we can write the eigenvalues of $\bA(\sigma)$ as
\begin{equation*}
\mu_k(\sigma)=\frac{G_k(\sigma)}{G_0(\sigma)},\qquad k=0,\dots,n-1.
\end{equation*}
With this convention \(\mu_0(\sigma)=1\) for every \(\sigma\), and for \(k\neq0\) typically \(|\mu_k(\sigma)|<1\).

Using the spectral decomposition of \(\bA(\sigma)\) we can write the solution $\bx(\sigma)$ of
\[ \min_{\bx} \frac{1}{2}\|\bA(\sigma) \bx - \bb \|^2 + \frac{\lambda^2}{2}\| \bx\|^2\]
as
\begin{equation*}
\bx(\sigma)=\bF^*(|\Lambda|^2+\lambda^2\bI)^{-1}\Lambda^*\bF\bb,
\end{equation*}
where \(|\Lambda|^2\) denotes the componentwise squared-modulus of the diagonal entries. Define \(\widetilde{\bb}:=\bF\bb\). Then
\begin{equation*}
\|\bA(\sigma)\bx(\sigma)-\bb\|^2
=
\|(\Lambda(|\Lambda|^2+\lambda^2\bI)^{-1}\Lambda^*-\bI)\widetilde{\bb}\|^2
\end{equation*}
and
\begin{equation*}
\|\bx(\sigma)\|^2=\|(|\Lambda|^2+\lambda^2\bI)^{-1}\Lambda^*\widetilde{\bb}\|^2.
\end{equation*}
Combining the two terms yields the representation
\begin{equation}\label{eq:phi_fourier}
\varphi(\sigma)=\sum_{k=0}^{n-1}\frac{\lambda^2}{2\bigl(|\mu_k(\sigma)|^2+\lambda^2\bigr)}\,|\tilde b_k|^2,
\end{equation}
where \(\tilde b_k\) are the entries of \(\widetilde{\bb}\). The \(k=0\) (zero-frequency) term equals \(\dfrac{\lambda^2}{2(1+\lambda^2)}|\tilde b_0|^2\) and is independent of \(\sigma\). All \(\sigma\)-dependence arises from the \(k\neq0\) terms.

Differentiating \eqref{eq:phi_fourier} gives
\begin{equation}\label{eq:phi_prime}
\varphi'(\sigma)
=
-\sum_{k=1}^{n-1}\frac{\lambda^2\,\mathrm{Re}\!\big(\overline{\mu_k(\sigma)}\,\mu_k'(\sigma)\big)}{\big(|\mu_k(\sigma)|^2+\lambda^2\big)^2}\,|\tilde b_k|^2,
\end{equation}
since \(\dfrac{d}{d\sigma}|\mu_k(\sigma)|^2 = 2\,\mathrm{Re}(\overline{\mu_k(\sigma)}\mu_k'(\sigma))\).

We now state sufficient conditions under which \(\varphi(\sigma)\) admits \(\sigma=0\) as a global minimizer.

\begin{enumerate}[label=(\roman*)]

\item \emph{Removable singularity at \(\sigma=0\).} The limit \(\lim_{\sigma\to0}\bA(\sigma)\) exists and equals the identity matrix, so one may define \(\bA(0)=\bI\) by continuity.
\item \emph{Evenness in \(\sigma\).} The functions $a_j(\sigma)$ are differentiable and even, i.e., $a_j(\sigma) = a_j(-\sigma)$. Consequently \(\mu_k(\sigma)=\mu_k(-\sigma)\) and \(\varphi(\sigma)=\varphi(-\sigma)\); hence \(\varphi\) is even and \(\varphi'(0)=0\).
\item \emph{Modal sign condition.} For each nonzero frequency index \(k\),
\begin{equation*}
\mathrm{Re}\!\big(\overline{\mu_k(\sigma)}\,\mu_k'(\sigma)\big)>0
\qquad\text{for all }\sigma>0.
\end{equation*}
\end{enumerate}

If (i)–(iii) hold, then each summand in \eqref{eq:phi_prime} is strictly positive whenever \(|\tilde b_k|^2\neq0\). Hence, if 
\(\bb\) has nonzero energy in some nonzero-frequency mode (i.e., there exists \(k\ge1\) with \(|\tilde b_k|^2>0\)), then 
\(\varphi'(\sigma)>0\) for all \(\sigma>0\). By evenness of \(\varphi\), we then have \(\varphi'(\sigma)<0\) for all 
\(\sigma<0\). Therefore, the functional \(\varphi\) decreases on \((-\infty,0)\) and increases on \((0,\infty)\), and consequently \(\sigma=0\) is the unique global minimizer of \(\varphi\) on \(\mathbb{R}\).

The sufficient conditions (i)--(iii) above are precisely those exhibited by the
example analyzed in Section~\ref{sec: motivating example} for $n=2$.
In that setting, the normalization by $G_0(\sigma)=1+a_1(\sigma)$ ensures that
$\bA(\sigma)\to\bI$ as $\sigma\to0$, so that the forward operator admits a
continuous extension at $\sigma=0$, in agreement with condition~(i).
The functions $a_0(\sigma)=1$ and
$a_1(\sigma)=\exp\!\left(-\frac{1}{2\sigma^2}\right)$ are even.
Consequently, the nontrivial eigenvalue
$\mu_1(\sigma)=(1-a_1(\sigma))/(1+a_1(\sigma))$
is even, and the reduced functional $\varphi(\sigma)$ is even with
$\varphi'(0)=0$, corresponding to condition~(ii).
Finally, since there is only one nonzero Fourier mode in the $2\times2$ case,
condition~(iii) reduces to the sign of $\mu_1(\sigma)\mu_1'(\sigma)$.
As shown in Section~\ref{sec: motivating example}, this quantity is strictly
positive for all $\sigma>0$, implying that the reduced functional decreases for
$\sigma<0$ and increases for $\sigma>0$ whenever the data has nonzero energy in
the corresponding mode. Hence $\sigma=0$ is the unique global minimizer.
Overall, conditions (i)--(iii) provide a direct extension of those already present
in the $2\times2$ example.

\section*{Acknowledgments} The authors gratefully acknowledge the insightful comments and questions from Robert Kohn and Benjamin Seibold during seminar talks given by M.I. Espa\~nol at the Courant Institute of Mathematical Sciences at New York University and Temple University, respectively, which motivated this work. The authors also thank Mac Hyman for valuable discussions during his visit to Arizona State University in February 2024. M.I. Espa\~nol was supported through a Karen EDGE Fellowship. G. Jeronimo was partially supported by the Argentinian grants UBACYT 20020190100116BA and PIP 11220200101015CO CONICET. 

\bibliographystyle{plain}
\bibliography{biblio.bib}

\end{document}